\let\csname equation*\endcsname\relax
\let\csname endequation*\endcsname\relax
\renewcommand{\vec}[1] {\ensuremath{\boldsymbol{#1}}}
\renewcommand{\mat}[1]{\mathbf{{#1}}}
\newcommand{\trace}{\mathrm{Tr}}
\newcommand{\R}{\mathbb{R}}
\newcommand{\cov}{\mathrm{Cov}}
\newcommand{\uu}{\vec{u}}
\newcommand{\m}{\vec{m}}
\newcommand{\lam}{\vec{\lambda}}
\newcommand{\thth}{\vec{\theta}}
\newcommand{\V}{\mat{V}}
\newcommand{\LL}{\mathcal{L}}
\newcommand{\dJdu}{\frac{\partial \hat{J}}{\partial \vec{u}}}
\newcommand{\dJdm}{\frac{\partial \hat{J}}{\partial \vec{m}}}
\newcommand{\dAdm}{\frac{\partial \mat{A}}{\partial \vec{m}}}
\newcommand{\Aobj}{\mathcal{A}}
\newtheorem{proposition}{Proposition}[section]
\newtheorem{remark}{Remark}[section]
\theoremstyle{definition}
\begin{document}
\title{Hyper-Differential Sensitivity Analysis for Inverse Problems Constrained by Partial Differential Equations}

\author{Isaac Sunseri, Joseph Hart, Bart van Bloemen Waanders, Alen Alexanderian}

\date{\today}
\begin{abstract}
High fidelity models used in many science and engineering applications couple multiple physical states and parameters. Inverse problems arise when a model parameter cannot be determined directly, but rather is estimated using (typically sparse and noisy) measurements of the states. The data is usually not sufficient to simultaneously inform all of the parameters. Consequently, the governing model typically contains parameters which are uncertain but must be specified for a complete model characterization necessary to invert for the parameters of interest. We refer to the combination of the additional model parameters (those which are not inverted for) and the measured data states as the``complementary parameters". We seek to quantify the relative importance of these complementary parameters to the solution of the inverse problem. To address this, we present a framework based on hyper-differential sensitivity analysis (HDSA). HDSA computes the derivative of the solution of an inverse problem with respect to complementary parameters. We present a mathematical framework for HDSA in large-scale PDE-constrained inverse problems and show how HDSA can be interpreted to give insight about the inverse problem. We demonstrate the effectiveness of the method on an inverse problem by estimating a permeability field, using pressure and concentration measurements, in a porous medium flow application with uncertainty in the boundary conditions, source injection, and diffusion coefficient.
\end{abstract}

\noindent{\it Keywords\/}: Inverse Problems, sensitivity analysis, model
uncertainty, design of experiments, subsurface flow.

\section{Introduction}

Rapid advances in numerical algorithms and computing infrastructure have made it feasible to simulate 
complex multiphysics systems governed by systems of partial differential equations (PDEs) on high resolution 
computational grids.
Inverse 
problems arise when some model
parameters cannot be determined directly, but rather are estimated using
measurements of the model state variables. The states
may correspond to different physical quantities with varying data volumes and
measurement fidelities, and measurements are typically sparse and 
noisy due to budget and hardware limitations.

Inverse problems governed by systems with complex physics involve various sources of
uncertainty. This includes the uncertainty in the parameters being estimated,
uncertainty in measurement data, and uncertainty in parameters in
governing PDEs that are not the focus of the parameter estimation, but are
needed for a full model specification.
For clarity, we refer to the model parameters being estimated as
\emph{inversion parameters} and to the other model parameters besides the
inversion parameter as \emph{auxiliary parameters}. Additionally, we refer to
the parameters specifying the experimental conditions, such as types of
measurements or measurement noise levels, as \emph{experimental parameters}.
The auxiliary parameters and the experimental parameters are needed for the
formulation of the inverse problem. We call the combination of auxiliary and
experimental parameters the \emph{complementary parameters}.  This article is
about understanding and quantifying the impact and relative importance of the
perturbations in complementary parameters to the solution of an inverse
problem.

For illustration, let us consider a subsurface flow problem, in which we seek
to invert for the log-permeability field using a tracer test. The forward model
we consider is given by the mass conservation, constrained  by Darcy's law,
resulting in a linear elliptic PDE governing the pressure, and a time-dependent
PDE governing diffusion and transport of the tracer.  The inversion parameter
here is the log-permeability field.  The auxiliary parameters include the
source terms (e.g., the tracer injection), boundary conditions, and
coefficients (e.g., the diffusion coefficient) in the governing PDE system.
The measurements correspond to the two states: pressure and concentration. The
experimental parameters correspond to noise in these measurements.

We propose a general framework to assess the relative importance of
complementary parameters in determining the solution of the inverse
problem. To do so, we build upon previous work
in~\cite{HartvanBloemenWaanders19}, and a series of related articles
\cite{Brandes06,griesse_constraints,Griesse_part_2,Griesse_part_1,Griesse_Thesis,Griesse_SISC,griesse_3d,Griesse_AD},
that introduced hyper-differential sensitivity analysis (HDSA) for
PDE-constrained optimization. HDSA computes the Fr\'echet derivative
of the solution of the inverse problem with respect to complementary
parameters. We use this derivative to define hyper-differential
sensitivities of the inverse problem solution with respect to the
complementary parameters.  These sensitivities describe the change in
the solution of the inverse problem with respect to perturbations of a
given parameter. We also define generalized sensitivity indices that
determine maximum (worst case) changes in the inverse problem solution
with respect to perturbations in a set of complementary parameters.

By providing sensitivity information on the experimental parameters, our
framework provides vital information for effective data collection. For
instance, by discovering the sensor measurements the inverse problem solution
is most sensitive to, we can identify the sensors where 
higher fidelity measurements are desired. 
This can be achieved by designing sensors with improved error
tolerances, or in problems where this is possible, repeating these sensor
measurements to reduce the associated measurement noise. 
Furthermore, we demonstrate that HDSA can be used to compare the relative importance 
of different types of sensors. 
As such, this process
complements experimental design which is used to design optimal sensor
placement for data measurements.  By calibrating the measurement
fidelities in a given experimental design (e.g., a sensor network), one can
make the most out of the measurements for effective parameter estimation.
Therefore, the proposed framework can be combined with an optimal experimental
design (OED) problem~\cite{Pazman86,Atkinson92,Ucinski05} to:
(i) identify an optimal set of
experiments; and (ii) calibrate the fidelities of the experiments or further
prune the specified set of experiments, based on the sensitivity analysis
results. 
HDSA provides a systematic framework to distinguish between measurements
obtained from different sensor types and understand the relative importance of
spatial and temporal sensor distributions. While HDSA is not intended to
replace OED, it augments it by providing unique insights into the influence of
various sensors in large-scale multiphysics applications.

Another important application of the proposed framework is guiding OED
under uncertainty.  In practical applications, an OED problem must be
found in such a way that it is robust with respect to uncertainty in
auxiliary model parameters; see e.g.,~\cite{KovalAlexanderianStadler19}. 
Performing sensitivity analysis of the
inverse problem solution with respect to auxiliary parameters informs
the sources of model uncertainty one needs to focus on when
solving an OED under uncertainty problem.  By focusing on sources of
model uncertainty the inverse problem solution is most sensitive to,
our framework can significantly reduce the complexity of an OED under
uncertainty problem. Furthermore, for applications in which multiple
experiments may be designed to target calibration of different
auxiliary parameters, the proposed sensitivities may identify where
experimental efforts should be invested to calibrate the most
influential parameters through a sequence of different experiments.

Additionally, in complex physics systems, typically the influence of various
sources of model uncertainty on the solution behavior is not clear a priori.
The proposed sensitivity analysis framework provides important insight about
the governing model.  

In contrast to traditional sensitivity
analysis, where one quantifies the contribution of auxiliary parameters to
variability in model output, our proposed framework provides a \emph{goal
oriented sensitivity analysis} approach by quantifying the impact of
perturbations in auxiliary parameters on estimation of unknown model
parameters. This enables determining which auxiliary parameters need to be
specified more accurately. In fact, it might be that some auxiliary parameters
should be estimated along with the inversion parameters, if possible.

Lastly, these sensitivities provide a computationally efficient low
order approach to uncertainty quantification for large-scale
systems. For instance, if the solution of the inverse problem must be
determined in real time to inform critical decision making, coupling
the estimated solution with a notion of uncertainty contributed by
errors in the complementary parameters provides real time uncertainty
estimation which is critical for making informed
decisions. 

The present work concerns local sensitivity analysis for deterministic
variational inverse problems.  We consider connections to statistical
formulations and global sensitivity analysis in
Section~\ref{sec:conclusion}. The contributions of this article are as follows:
\begin{itemize}

\item We define HDSA with respect to experimental parameters. This provides a systematic 
approach to compare different sensor types and reveals information about the relative 
importance of distributed (spatially and temporally) sensor measurements, neither of 
which can be easily determined by traditional OED.

\item Theoretical results are presented for linear inverse problems
to provide intuition and demonstrate properties of the sensitivities with respect to experimental parameters. 

\item We build upon previous work \cite{HartvanBloemenWaanders19} to develop a
more comprehensive mathematical framework for HDSA of nonlinear multiphysics
inverse problems. This is done, in particular, by maturing the idea of
generalized sensitivities as a tool for systematically comparing the importance
of auxiliary parameters (which may be of differing physical characteristics and
scales) alongside the novel development of HDSA for experimental parameters.

\item Comprehensive numerical results,
in a large-scale subsurface flow application,  
demonstrate the interpretation and use of HDSA for nonlinear 
multiphysics inverse problems.

\end{itemize}

The remainder of the article is organized as
follows. Section~\ref{sec:preliminaries} outlines the basic principles
of inverse problems and design of experiments. Section~\ref{sec:HDSA}
provides the mathematical formulation of hyper-differential
sensitivities and their interpretation for inverse problems
constrained by multiphysics. The computational considerations,
implementation, and cost analysis of HDSA is detailed in
Section~\ref{sec:comp_consider}. Section~\ref{sec:model_prob} presents
a large scale, multiphysics model problem, which is then used to
construct sensitivity results that are detailed in
Section~\ref{sec:sens_results}.  Concluding remarks and notes on
potential areas of future work are highlighted in
Section~\ref{sec:conclusion}.

\section{Preliminaries}
\label{sec:preliminaries}
In this section, we recall background material on inverse problems and design
of experiments, which are augmented by the proposed sensitivity
analysis in subsequent sections.  

\subsection{Inverse Problems}

In the present work, we are concerned with ill-posed inverse problems
governed by PDEs. Specifically, we seek to estimate a parameter 
$m$, henceforth called the inversion parameter, using data $\vec{y}$ and 
a model of the form 
\[
    F(m) + \vec\eta = \vec{y}. 
\]
Here $F$ is the parameter-to-observable map and $\vec\eta$
represents measurement noise. Evaluating $F(m)$ requires solving the 
governing PDEs and extracting the solution at measurement points.

Due to ill-posedness and availability of only sparse noisy measurements, 
we are led to variational formulations with suitable regularizations.
Specifically, we consider an optimization problem of the following form:
\begin{equation}\label{opt_prob}
\begin{aligned} 
&\min\limits_{u,m} J(u,m,\theta_e)  \\
\text{ s.t. } \ &v(u,m,\theta_a) = 0 \\
& u \in U, m \in \mathcal{M}.
\end{aligned} 
\end{equation}
Here, $U$ is an infinite dimensional reflexive Banach space containing the
state, $\mathcal{M}$ is a possibly infinite dimensional Hilbert space, 
$J$ is a regularized data misfit cost functional (we make this precise
below), and 
$v$ represents the 
constraining PDE system.  The experimental parameters, $\theta_e$, represent
uncertainty in the data, while $\theta_a$ are the auxiliary parameters
contained in the system of PDEs. Generally, solving this optimization problem
produces parameter estimates that are consistent with measurement data and the
model. For the remainder of the article, we refer to~\eqref{opt_prob} as the
inverse problem.

We mention that an alternative approach to address ill-posed inverse problems
is to consider a Bayesian formulation~\cite{Stuart10}.
In this approach, the inversion parameter $m$ is modeled as a random variable, and
the goal is to find a distribution law for $m$ that is consistent with
measurement data, the model, and a prior distribution of $m$ that models our
prior knowledge/beliefs about $m$.  In the present work, we
restrict our attention to deterministic formulation of inverse problems, as
described above.

We assume that the PDE represented by $v$
is uniquely solvable for any admissible $m$ and $\theta_a$. This
allows us to formulate \eqref{opt_prob} in \textit{reduced
  space} \cite{Akcelik_06}. Letting $\mathcal A(m,\theta_a)$ denote the solution operator for the PDE, i.e. $v(\mathcal A(m,\theta_a),m,\theta_a)=0$ for all $m$ and $\theta_a$, we define the reduced objective function $\hat{J}(m,\theta_e,\theta_a)=J(\mathcal A(m,\theta_a),m,\theta_e)$. In this article we focus on objective functions defined as a linear combination of data misfit and regularization, yielding a general form for the inverse problem
\begin{align} 
\label{inv_prob}
\min_{m \in \mathcal{M}} \hat{J}(m,\theta_e,\theta_a) := \frac12\| \mathcal Q\mathcal{A}(m,\theta_a) - \vec{y}(\theta_e)\|^2 + \alpha \mathcal{R}(m),
\end{align}
where $\vec{y}(\theta_e)$ is a vector of measured data (with uncertainty parameterized by $\theta_e$),  
$\mathcal Q$ an observation operator that maps the PDE solution to a set of 
observation locations, ${\mathcal R}$ is a regularization operator, 
and $\alpha$ is a regularization parameter. Traditional approaches to solving inverse problems fix $\theta=(\theta_e,\theta_a)$ to a best estimate and solve \eqref{inv_prob} by optimizing over $m$. Analyzing the influence of $\theta$ on the solution of \eqref{inv_prob} is the focus of this article.

Besides ill-posedness, such inverse problems are difficult to solve for a 
number of other reasons. These include having noisy observations, 
expensive forward PDE solves, tuning multiple experimental and 
modeling parameters, and optimization in infinite (or large finite) dimensional 
spaces. Common optimization methods used to tackle such problems 
include quasi-Newton, inexact 
Newton-CG, Gauss-Newton, and truncated CG trust region solvers. 
These optimization problems often require efficient gradient and Hessian computation 
through adjoint state methods, and repeated large scale linear system solves with
Krylov iterative methods. 
We direct the interested reader to a number of classical inverse problem 
references~\cite{BuiThanh12,Dashti17,Engl96,Ito96,Kaipio05,Tarantola05,Vogel02}.

\subsection{Design of Experiments}
An important aspect of solving an inverse problem is the collection of
informative measurement data, which is guided by \emph{design of 
experiments}. In our target inversion, this generally 
corresponds to specifying the locations of the sensors 
used to collect measurement data and is known as 
an optimal experimental design (OED) 
problem~\cite{Pazman86,Atkinson92,Ucinski05}. OED for inverse problems 
governed by differential equations has received significant attention 
in recent years; see e.g.,
\cite{
BauerBockKorkelEtAl00,
KorkelKostinaBockEtAl04,
HaberHoreshTenorio08,
BockKoerkelSchloeder13,
HuanMarzouk13,
LongScavinoTemponeEtAl13,
long2015fast,
HaberHoreshTenorio10,
HoreshHaberTenorio10,
AlexanderianPetraStadlerEtAl14,
AlexanderianPetraStadlerEtAl16,
AlexanderianSaibaba18}.
An OED problem is typically formulated with a statistical 
formulation of the inverse problem in mind. An optimal 
design is one that optimizes the statistical quality of the 
estimated parameters. 
Examples include
maximizing the expected information gain, leading to a D-optimal
design problem, or minimization of average posterior variance, 
leading to a Bayesian A-optimal design problem.

OED is a powerful tool that is used on a wide variety of problems.
It is also a very challenging problem both from mathematical and
computational point of view, especially when it comes to nonlinear
inverse problems governed by PDEs; see
e.g.,~\cite{HoreshHaberTenorio10,AlexanderianPetraStadlerEtAl16}.  The
developments in the present work are closely related to OED: while we
do not directly solve an OED problem, we address the following
relevant questions: (i) which measurements is the solution of an
inverse problem most sensitive to? And (ii) which measurement types
are most influential to the solution of the inverse problem? The
latter is tied to important questions typically not addressed in OED
literature: how should multi-purpose sensors that can take different
types of measurements be deployed, and how should different sensor
types be designed and deployed in an existing experimental design?

\section{Hyper-differential Sensitivity Analysis for Inverse Problems}
\label{sec:HDSA}
This section is devoted to our proposed framework for hyper-differential
sensitivity analysis (HDSA) of PDE-constrained inverse problems.  In
Subsection~\ref{subsec:math_form}, we detail the mathematical formulation of
the operator mapping complementary parameters to solutions of the
PDE-constrained inverse problem, and its Fr\'echet derivative.  In
Subsection~\ref{subsec:sens_ind} the hyper-differential sensitivities are
defined, as well as the generalized sensitivity index which is used to compare
the importance of sets of complementary parameters with different physical
characteristics.  Subsection ~\ref{subsec:linear_results} presents an
analytical result for linear inverse problems which connects the sensitivities
to the trace of the covariance in the solution of the inverse problem. 

\subsection{Mathematical Formulation}
\label{subsec:math_form}
HDSA differs from 
traditional sensitivity analysis in that it determines the sensitivity of the solution of an 
optimization problem rather than simply a model (which is typically a constraint 
in the optimization problem). We seek to perform HDSA on \eqref{inv_prob} to determine the sensitivity of the optimal $m$ to uncertainty in complementary (both experimental and auxiliary) parameters $\theta$ which are fixed when solving \eqref{inv_prob}.

HDSA uses the derivative of the solution of \eqref{inv_prob} 
with respect to $\theta$. To formally define HDSA, we assume that $\hat{J}$ is twice continuously differentiable with respect to $(m,\theta)$ and that $m^\star$ is a local minimum of \eqref{inv_prob} for specified complementary parameters $\theta=\theta^\star$. Assuming that the Hessian of $\hat{J}$ with respect
to $m$, evaluated at $(m^\star,\theta^\star)$, is positive definite \cite{HartvanBloemenWaanders19, Brandes06},
we can apply the implicit function theorem \cite[p.~38]{Ambrosetti95} to $\hat{J}_m$ (the Fr\'echet derivative of $\hat{J}$ with respect to $m$), to define 
a continuously differentiable mapping $\mathcal{F}$ 
from a neighborhood of $\theta^\star$ to a 
neighborhood of $m^\star$,
$$\mathcal{F}: \mathcal{N}(\theta^\star) \to \mathcal{N}(m^\star)$$ 
such that
$$ \hat{J}_m(\mathcal{F}(\theta),\theta) = 0, \quad \text{ for all } \quad \theta \in \mathcal{N}(\theta^\star),  $$
i.e., $\mathcal F$ maps complementary parameters to stationary points of \eqref{inv_prob}. The Fr\'echet derivative of $\mathcal{F}$ with respect to $\theta$, evaluated at $\theta^\star$, is given by
\begin{align}
\label{sen_op}
 \mathcal D := \mathcal{F}_\theta(\theta^\star) = -\mathcal{H}^{-1}\mathcal{B},
 \end{align}
where $\mathcal{H}$ is the Hessian of $\hat{J}$ with respect to $m$, evaluated at $m^\star$ and $\theta^\star$, 
i.e., $\mathcal H:=\hat{J}_{m,m}(m^\star,\theta^\star)$, and 
$\mathcal{B}$ is the Fr\'echet derivative of $\hat{J}_m$ with 
respect to $\theta$, evaluated at $m^\star$ and $\theta^\star$, i.e., $\mathcal B:=\hat{J}_{m,\theta}(m^\star,\theta^\star)$.

An intuitive interpretation of \eqref{sen_op} is that once \eqref{inv_prob} has
been solved to optimality for the specified $\theta^\star$, we take a
perturbation with respect to $\theta$ ($\mathcal B$) and apply a Newton step
($-\mathcal H^{-1}$) to update the solution of the inverse problem. We
interpret $\mathcal D \overline{\theta}$ as the sensitivity of the solution of
the inverse problem when the complementary parameters are perturbed in the
direction $\overline{\theta}$. Note that upon discretization, applying the
inverse of $\mathcal H$ to a vector requires a large-scale linear solve, which requires
many PDE solves. 

\subsection{Sensitivity Indices} \label{subsec:sens_ind}
We use \eqref{sen_op} to define sensitivity indices that attribute importance to each parameter. 
To this end, we first formalize assumptions about the parameter space.
In general, the complementary parameters $\theta = (\theta_e, \theta_a) \in \Theta$ take values 
in a possibly infinite dimensional space $\Theta = \Theta_1 \times \Theta_2 
\times ... \times \Theta_K$, which is a product of $K$ Hilbert spaces. 
The first $\ell$ parameter spaces contain the experimental parameters $\theta_e \in 
\Theta_1 \times ... \times \Theta_\ell$, while the remainder contain 
the auxiliary parameters $\theta_a \in \Theta_{\ell+1} \times ... \times \Theta_K$.
The product space $\Theta$ is equipped with the inner product
$$ \langle \theta, \phi \rangle_{\Theta} = \langle \theta_1, \phi_1 \rangle_{\Theta_1} + \dots + \langle \theta_K, \phi_K \rangle_{\Theta_K}, \quad \text{ for } \quad \theta, \phi \in \Theta. $$
We are particularly interested in cases where each $\Theta_k$, $k=1,2,\dots,K$, may have significantly different physical characteristics. For instance, corresponding to various physical quantities (thermal, fluid, solid, etc.) which have different spatial and temporal dependence.

To better understand spatial and temporal patterns of importance within a 
particular parameter or data source, we define pointwise sensitivity indices in space and time, and later generalized sensitivities which remove these units. From here on, we use 
$\mat{\Theta}$ and $\mat{\Theta}_k$ to denote the discretizations of the possibly 
infinite dimensional spaces $\Theta$ and $\Theta_k$. To respect spatiotemporal structure in discrete data, we use weighted norms corresponding to spatial and/or temporal discretizations. For instance, if $\vec{\theta}_k$ models perturbations of spatiotemporal data measurements then
$$ \|\vec{\theta}_k\|_{\mat{\Theta}_k} =
\sqrt{\frac{1}{n_tn_s}\sum_{i=1}^{n_tn_s}(\theta_k^i)^2}$$
where $\theta_k^i$ denotes the $i^{th}$ component of the vector $\vec{\theta}_k \in \mathbb R^{n_tn_s}$, $n_s$ and $n_t$ denote the number of spatial and temporal points, respectively.

Upon discretization of \eqref{opt_prob}, we let $\{\vec{b}_k^1,\vec{b}_k^2,\dots,\vec{b}_k^{n_k}\}$ denote 
a basis for each parameter space $\mat{\Theta}_k$,
where $n_k$ is the dimension of $\mat{\Theta}_k$. 

We define a basis for $\mat{\Theta}$ as $\{\vec{e}^i_k\}$ for $k = 1,\dots,K$ and $i = 1,\dots,n_k$ where

$$ \vec{e}_k^i = \begin{pmatrix}
\vec{0}_1 & \dots & \vec{0}_{k-1} & \vec{b}_k^i & \vec{0}_{k+1} & \dots & \vec{0}_K
\end{pmatrix}^{\top}.$$

We define the pointwise sensitivity indices for $k = 1,\dots,K$ and $i = 1,\dots,n_k$ as,
\begin{equation} 
\label{eqn:ind_sens}
	S_k^i = \frac{\| \mat{D}\vec{e}_k^i \|_{\mat{M}}}{\| \vec{e}_k^i \|_{\mat{\Theta}}},
\end{equation}
where $\mat{D}$ is the discretized sensitivity operator \eqref{sen_op}, and $\|\cdot\|_\mat{M}$ is the norm discretized consistently with respect to the norm in $\mathcal{M}$. 
The pointwise sensitivities measure the change in the solution of the inverse problem with 
respect to a perturbation of the $k^{th}$ parameter in direction $\vec{b}_k^i$.
Thus, high sensitivity indicates that errors 
in the parameter will cause a significant change in the reconstructed solution. 
This leads to an interpretation of the sensitivities as quantifying the importance of accurately
measuring or modeling the parameter. 

We would also like to compare sensitivities of various parameters with differing units to 
determine their importance relative to each other. 
To do this, we seek to formulate a generalized sensitivity index for a 
particular set of parameters
$\vec{\theta}_k \in \mat{\Theta}_k$, which can be interpreted 
as the maximum change in the solution with respect to a unit norm perturbation of 
the $k^{th}$ parameter.
Care must be taken here to ensure the sensitivities can be
compared accurately, as some parameters may be local in space and/or time, while others 
are not. We first consider a projection operator 
$\mat{T}_k: \mat{\Theta} \to \mat{\Theta}$, 
which zeros out all elements of $\vec{\theta}$ except those 
in $\mat{\Theta}_k$. Then the generalized sensitivity for parameters is
\begin{equation}
\label{eqn:gen_sens}
S_k = \max_{\vec{\theta} \in \mat{\Theta}}
\frac{\|\mat{D}\mat{T}_k\vec{\theta}\|_{\mat{M}}}{\|\vec{\theta}\|_{\mat{\Theta}}} .\\
\end{equation}
In this way, we obtain a single generalized sensitivity for each set of parameters $\vec{\theta}_k$, $k = 1,\dots,K$, which allows for a dimensionless comparison of parameters with different physical characteristics. 
 
\subsection{Interpretation of Experimental Parameter HDSA for Linear Inverse Problems}
\label{subsec:linear_results}
Sensitivity of auxiliary parameters, $\theta_a$ in the notation of this article, is a natural concept with a clear physical interpretation. The sensitivity of the solution of the inverse problem to experimental parameters is less intuitive, so we present an analytic result in Proposition~\ref{prp:lin_inv} to provide intuition. For conciseness and clarity in this subsection, we consider only uncertainty in the experimental parameters. 

Assume that $ \tilde{\vec{y}} = (\tilde{y}_1, \tilde{y}_2, \dots, \tilde{y}_n)^{\top}$ is a vector of noisy data which may be modeled by a linear parameter-to-observable map $\mathcal Q \mathcal A$ acting on an unknown parameter $m$. Estimating this unknown $m$ gives rise to a linear inverse problem.
To apply HDSA with respect to experimental parameters, we model the data as
$$y_i = \tilde{y}_i(1 + \theta_e^i) \qquad i=1,2,\dots,n,$$
where $(\theta_e^1, \theta_e^2, \dots, \theta_e^n)^{\top} = \vec{\theta_e} \sim
 N(0,\mat{\Sigma})$
is a perturbation of the nominal value $\tilde{y}_i$, i.e. a noise model.
Assuming uncorrelated observations, the noise covariance matrix is
$\mat{\Sigma} = \mathrm{diag}(\sigma_1^2, \ldots, \sigma_n^2)$. 
An estimate of the inversion parameter can be obtained by 
solving 
\begin{equation}\label{equ:lin_inv}
\min_{m} \hat{J}(m) :=
\frac12\| \mathcal Q \mathcal A m-\vec{y}\|_{\mat{\Sigma}^{-1}}^2 +
\frac{\alpha}{2}\|m\|_{R}^2
\end{equation} 
with $\vec{\theta_e}=0$, i.e. solving a linear least squares problem with the data $\tilde{\vec{y}}$. The norm in the regularization term is weighted by regularization
operator $R:\mathcal{M}\to\mathcal{M}$ which we assume is a self-adjoint, strictly 
positive linear operator on $\mathcal{M}$ \cite{Vogel02}. 

The estimator $m^\star$ obtained from solving \eqref{equ:lin_inv} is
a random variable, due to the random noise in the data. 
HDSA provides the sensitivity of its solution with respect to $\vec{\theta_e}=(\theta_e^1,\theta_e^2,\dots,\theta_e^n)$. This may be interpreted as the sensitivity of the least squares estimate with respect 
to the data, a metric to assess the relative importance of the observations. Such sensitivity information can be used to inform sensor designs and measurement 
tolerances. Proposition~\ref{prp:lin_inv} relates the variance of $m^\star$ (with respect to randomness in $\vec{\theta_e}$) to the pointwise sensitivities with respect to data measurements $S^i$ as defined 
in equation \eqref{eqn:ind_sens}. Here, we omit the subscript $(K=1)$ on the pointwise 
sensitivities, as we only consider one type of parameter in this section.

\begin{proposition}\label{prp:lin_inv}
$\trace(\cov(m^\star)) =  \sum_{i=1}^n (S^i)^2$, where $\trace$ denotes the trace of a linear operator and $S^i$ is defined as in \eqref{eqn:ind_sens} when $\Theta=\mathbb R^n$ is equipped with the $\mat{\Sigma}^{-1}$ weighted norm.
\end{proposition}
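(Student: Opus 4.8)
The plan is to exploit the fact that, for the linear least squares problem \eqref{equ:lin_inv}, the minimizer is available in closed form and depends affinely on $\vec{\theta_e}$, so that the covariance and the sensitivities can each be written as a single trace. Write $\mat{G}=\mathcal Q\mathcal A$ for the discretized forward operator and encode the multiplicative noise model as $\vec y(\vec{\theta_e})=\tilde{\vec y}+\mat{Y}\vec{\theta_e}$ with $\mat{Y}=\mathrm{diag}(\tilde y_1,\dots,\tilde y_n)$. Setting the gradient of $\hat J$ to zero yields the normal equations and the explicit solution $m^\star(\vec{\theta_e})=\mat{H}^{-1}\mat{G}^\top\mat{\Sigma}^{-1}\vec y(\vec{\theta_e})$, where $\mat{H}=\mat{G}^\top\mat{\Sigma}^{-1}\mat{G}+\alpha\mat{R}$ is symmetric positive definite by the assumptions on $R$; this is exactly the invertible Hessian required for the implicit function theorem underlying \eqref{sen_op}.

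First I would specialize the sensitivity operator \eqref{sen_op} to this problem. Differentiating the reduced gradient $\hat J_m=\mat{G}^\top\mat{\Sigma}^{-1}(\mat{G}m-\vec y(\vec{\theta_e}))+\alpha\mat{R}m$ gives $\mathcal H=\hat J_{m,m}=\mat{H}$ and the mixed derivative $\mathcal B=\hat J_{m,\vec{\theta_e}}=-\mat{G}^\top\mat{\Sigma}^{-1}\mat{Y}$, so that the discrete sensitivity operator is $\mat{D}=-\mathcal H^{-1}\mathcal B=\mat{H}^{-1}\mat{G}^\top\mat{\Sigma}^{-1}\mat{Y}$. Because $m^\star$ is affine in $\vec{\theta_e}$, the relation $m^\star-\mathbb E[m^\star]=\mat{D}\vec{\theta_e}$ holds exactly rather than only to first order, and since $\cov(\vec{\theta_e})=\mat{\Sigma}$ we obtain $\cov(m^\star)=\mat{D}\,\mat{\Sigma}\,\mat{D}^\top$.

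Next I would evaluate both sides of the claimed identity and show each equals $\trace(\mat{\Sigma}\,\mat{D}^\top\mat{M}\mat{D})$. For the left side, the trace of a covariance operator equals the expected squared norm of the fluctuation in the ambient ($\mathcal M$) inner product, so $\trace(\cov(m^\star))=\mathbb E\|\mat{D}\vec{\theta_e}\|_{\mat{M}}^2=\mathbb E[\vec{\theta_e}^\top\mat{D}^\top\mat{M}\mat{D}\vec{\theta_e}]=\trace(\mat{\Sigma}\,\mat{D}^\top\mat{M}\mat{D})$. For the right side, take the $\vec e^i$ to be the standard coordinate vectors of $\Theta=\R^n$; under the $\mat{\Sigma}^{-1}$-weighted norm $\|\vec e^i\|_\Theta^2=(\mat{\Sigma}^{-1})_{ii}=\sigma_i^{-2}$, while $\|\mat{D}\vec e^i\|_{\mat{M}}^2=(\mat{D}^\top\mat{M}\mat{D})_{ii}$. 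Hence $\sum_{i=1}^n(S^i)^2=\sum_{i=1}^n\sigma_i^2(\mat{D}^\top\mat{M}\mat{D})_{ii}=\trace(\mat{\Sigma}\,\mat{D}^\top\mat{M}\mat{D})$, matching the left side.

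I expect the main obstacle to be careful bookkeeping rather than any deep difficulty. The two places needing attention are (i) propagating the multiplicative noise model through the optimality condition so that the correct diagonal factor $\mat{Y}$ appears in $\mat{D}$, and (ii) keeping the three weighted inner products straight, namely the $\mat{\Sigma}^{-1}$ weighting inside the misfit, the $\mat{\Sigma}^{-1}$ weighting that defines $\|\cdot\|_\Theta$, and the $\mat{M}$ weighting on $\mathcal M$. The decisive point is that the $\sigma_i^2$ factors generated by the $\mat{\Sigma}^{-1}$-weighted denominators of the $S^i$ reassemble exactly into the noise covariance $\mat{\Sigma}=\cov(\vec{\theta_e})$, after which the cyclic property of the trace closes the argument.
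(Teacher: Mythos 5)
Your proposal is correct and follows essentially the same route as the paper's proof: derive the closed-form affine dependence of $m^\star$ on $\vec{\theta_e}$, identify $\mathcal D$ from \eqref{sen_op}, write $\cov(m^\star)=\mathcal D\mat{\Sigma}\mathcal D^*$, and match traces term by term against $\sum_i (S^i)^2$ using the cancellation between the $\sigma_i^2$ from $\mat{\Sigma}$ and the $\sigma_i^{-2}$ from the $\mat{\Sigma}^{-1}$-weighted denominators. The only cosmetic difference is that you make the $\mathcal M$-inner-product weight $\mat{M}$ explicit by writing both sides as $\trace(\mat{\Sigma}\mat{D}^\top\mat{M}\mat{D})$, whereas the paper absorbs it into the adjoint $\mathcal D^*$.
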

\begin{proof}
Computing the Fr\'echet derivative of the objective $\hat{J}$ in \eqref{equ:lin_inv}, setting it equal to zero, and solving for $m$ yields the solution of the inverse problem,
\[
m^\star = (\Aobj^*\mat{W}\Aobj + \alpha R)^{-1}\Aobj^*\mat{W}
\vec{y}=(\Aobj^*\mat{W}\Aobj + \alpha R)^{-1}\Aobj^*\mat{W} ( \tilde{\vec{y}} +
\tilde{\mat{Y}} \vec{\theta_e}),
\]
where $\Aobj^*$ denotes the adjoint of the linear operator $\Aobj$ and $\tilde{\mat{Y}}=\text{diag}\Big(\tilde{y_1},\tilde{y_2},\dots,\tilde{y_n} \Big)$.
Computing the Fr\'echet derivative of $m^\star$ with respect to $\vec{\theta_e}$ (which coincides with \eqref{sen_op}) yields
\[
\mathcal D = (\mathcal A^*\mat{W}\mathcal A + \alpha R)^{-1}\mathcal A^*\mat{W} \tilde{\mat{Y}}.
\]  
The covariance of the estimator $m^\star$ (with respect to randomness in $\vec{\theta_e}$) is  
\[
\cov(m^\star) = \cov(\mathcal D \vec{\theta_e})  = \mathcal D\mat{\Sigma} \mathcal D^*. 
\]
Therefore, 
\[
\begin{aligned}
\trace(\cov(m^\star)) &= \trace(\mathcal D\vec{\Sigma} \mathcal D^*) = \trace(\mathcal D^*\mathcal D\vec{\Sigma} ) = 
\sum_{i=1}^n\langle \vec{e}_i,\mathcal D^*\mathcal D\mat{\Sigma} \vec{e}_i\rangle \\
& = \sum_{i=1}^n (\sigma_i\| \mathcal D\vec{e}_i\|_\mathcal{M})^2 = \sum_{i=1}^n (\sigma_i \| \vec{e}_i \|_{\mat{\Sigma}^{-1}}S^i)^2 = \sum_{i=1}^n (S^i)^2,
\end{aligned}
\]
where $\vec e_i$ is the $i^{th}$ canonical unit vector in $\mathbb R^n$.
\end{proof}
This result provides useful intuition into the sensitivity indices and indicates
that the variance of the inverse problem solution is scaled by the magnitude 
of the sensitivities with respect to data. 

\begin{remark}
Notice that Proposition~\ref{prp:lin_inv} generalizes naturally to
Bayesian linear inverse problems. With a Gaussian prior and likelihood, the solution of the Bayesian linear inverse problem is a
Gaussian posterior. With the assumption 
$\vec{\theta}_e \sim N(0, \mat{\Sigma})$ on the measurement noise, and 
an appropriately chosen prior covariance, the maximum a posteriori (MAP)
point is equivalent to the solution $m^\star$ of the deterministic linear inverse 
problem. Taking the trace of the MAP point's covariance we again 
conclude Proposition~\ref{prp:lin_inv}. 
Note that in the Bayesian setting, we consider the average variance of the
MAP estimator as a measure of robustness of this point estimator for the 
inversion parameter. This is different than the average posterior uncertainty in 
the parameter given by the trace of the posterior covariance operator. 
\end{remark}

\section{Computational Considerations}
\label{sec:comp_consider} 
We begin our discussion with a simple 
illustrative example. 
To highlight 
the
dimensions of the discretized operators
and gain insight into the 
computational complexity of HDSA, we consider 
a discretized inverse problem with only auxiliary parameters: 
\begin{equation}\label{equ:ip_discrete} 
\begin{aligned}
\min_{\vec{m}} \hat{J}(\vec{m}) &= \frac12\|\mat{Q}\vec{u}-\vec{y}\|_\mat{W}^2 +
\frac{\alpha}{2}\|\vec{m}\|_\mat{R}^2 \\
\text{where } \  \mat{L}(\vec{m})\vec{u} &= \mat{V}\thth.
\end{aligned}
\end{equation}
where $\mat{Q}\in \R^{d\times n}$ is an observation operator, $\vec{u}\in\R^n$ the state 
vector,
$\vec{y}\in\R^d$ is the vector of experimental observations,
$\mat{W}\in\R^{d\times d}$ a symmetric weight matrix, $\alpha > 0$ a regularization
coefficient, $\vec{m}\in\R^p$ the discretized inversion parameter,
$\mat{R}\in\R^{p\times p}$ a symmetric positive definite regularization operator,
$\mat{L}(\vec{m})\in \R^{n\times n}$ a discretized differential operator,
$\vec{\theta}\in\R^k$ the vector of auxiliary parameters, and
$\mat{V}\in\R^{n\times k}$. Note that the discretized 
state dimension $n$ corresponds to the number of degrees of freedom in the mesh (typically large), and for problems with distributed parameters, $p$ will also have a comparable dimension to $n$ (frequently equal). The dimension of the auxiliary parameters, $k$, can also be large, potentially larger than $n$ when there are multiple distributed auxiliary parameters.

In practice, we compute the action of the gradient and Hessian of $\hat{J}$ using
a formal Lagrangian approach where each application of the Hessian requires two linear
PDE solves (inverting the matrix $\mat{L}(m)$). To compute the action of $\mat{D}$ (the discretization of \eqref{sen_op}) on a vector, we require 2 linear solves to apply the matrix $\mat{B}$ to a vector, and then $2I$ linear solves to apply $\mat{H}^{-1}$ to the resulting vector, where $I$ is the number of iterations needed by an iterative linear solver.
We direct the reader to~\ref{mtd:adj_op}, where we demonstrate the
adjoint method used to compute the gradient and Hessian of the reduced
objective function $\hat{J}$, as well as the operator $\mat{B}$. 

In general, the sensitivity indices \eqref{eqn:ind_sens} and generalized
sensitivity indices \eqref{eqn:gen_sens} may be computed in a variety of ways.
The efficiency of different approaches depends upon (i) the dimension of the
parameter space, (ii) the computational cost of the PDE solves, (iii) the
structure of the Fr\'echet derivative $\mathcal D$, and (iv) the available
computational resources. 

The computational bottleneck when computing \eqref{eqn:ind_sens} and
\eqref{eqn:gen_sens} is repeatedly inverting $\mathcal H$ (a large linear
system solve). In general, we are interested in systems of nonlinear PDEs. For
such systems, each application of $\mathcal H^{-1}$ requires $2I$ linearized
PDE solves, where $I$ is the number of iterations required by the linear solver
(such as conjugate gradient). Because HDSA is post-optimality analysis, we do
not require solving nonlinear systems repeatedly as in the inverse problem, but
rather solving PDEs which are linearized about the solution of the inverse
problem.

As introduced in \cite{HartvanBloemenWaanders19}, a randomized generalized
eigenvalue problem may be formulated to estimate the truncated generalized
singular value decomposition (GSVD) of $\mathcal D$. When the parameter
dimension is large and Fr\'echet derivative $\mathcal D$ is low rank, both
\eqref{eqn:ind_sens} and \eqref{eqn:gen_sens} may be efficiently estimated by
using the truncated GSVD and leveraging the parallelism of randomized methods.
We refer the reader to \cite{HartvanBloemenWaanders19} for additional details.

If $\mathcal D$ is not low rank but the parameter dimension and cost per PDE
solve is mild, we may compute \eqref{eqn:ind_sens} and \eqref{eqn:gen_sens}
directly by applying $\mathcal D$ to each basis function in $\Theta$. This does
not exploit structure as in the GSVD approach, but it is embarrassingly
parallel making it feasible for moderate parameter dimensions.

If $\mathcal D$ is not low rank and the parameter dimension or cost per PDE solver prohibits computing \eqref{eqn:ind_sens} and \eqref{eqn:gen_sens} directly, we may still compute \eqref{eqn:gen_sens} using a GSVD. Each generalized sensitivity index corresponds to the leading singular value of $\mathcal D$ acting on a projection operator. Since the number of generalized sensitivities are typically small, they may be estimated by using randomized solvers to compute the leading singular value. By exploiting parallelism, this may be done with a modest number of linearized PDE solvers regardless of the spectral decay in $\mathcal D$.

\section{Model Problem}
\label{sec:model_prob}
In this section, we present a multiphysics model problem
which is then used in Section~\ref{sec:sens_results} to compute hyper-differential sensitivities 
and demonstrate the usefulness and flexibility of HDSA. 
As a motivating example, we consider the problem of identifying the permeability 
field of a porous subsurface region with a tracer substance flowing through the domain. 
We consider a unit square domain $\Omega$ with 
boundary $\Gamma = \cup_{i=0}^3 \Gamma_i$, where 
$\Gamma_0$, $\Gamma_1$, $\Gamma_2$, and $\Gamma_3$ 
denote the bottom, right, top, and left edges of $\Omega$, respectively. 

We model subsurface flow of a fluid through a porous medium with Darcy's Law
and consider transport of the tracer through the medium governed by the
advection diffusion equation: 
\begin{subequations}\label{equ:pde}
\begin{align} 
-\nabla\cdot(e^m \nabla p) &= 0 \quad 
\text{ in } \Omega \label{equ:pressure}\\
c_t - \nabla\cdot\Big( \epsilon \nabla c \Big) + \nabla\cdot\big( \vec{v} c\big)
&= g \quad \text{ in } [0,T]\times\Omega \label{equ:conc}\\
p &= p_1 \quad \text{on } \Gamma_1 \\
p &= p_2 \quad \text{on } \Gamma_3 \\
\nabla p \cdot n &= 0 \quad \text{ on } \Gamma_0 \cup \Gamma_2 \\
\nabla c \cdot n &= 0 \quad \text{ on } [0,T]\times\{\Gamma_0\cup\Gamma_1\cup\Gamma_2\cup\Gamma_3\} \\
c(0,\cdot) &= 0 \quad \text{ in } \Omega 
\end{align} 
\end{subequations}

Here $p$ denotes the pressure field, $m$
the log-permeability field of the medium, 
$\vec{v} = -e^m\nabla p$ the Darcy velocity,
$c(t,x)$ the tracer concentration, $\epsilon$   
the diffusivity constant, and $g$ 
the source term of the injected tracer.
In the present example, we used $\epsilon = 0.025$.
For simplicity of notation, the constant fluid viscosity and constant 
porosity of the medium have been removed from the model. 
The Dirichlet pressure boundary conditions (on left and right 
boundaries) are described by non-zero fuctions
$p_1$ and $p_2$. We let $p_1$ be 
greater in magnitude than $p_2$, as this pressure difference 
will drive fluid flow from right to left through the domain,  
\[
\begin{aligned}
p_1(y) &= 15 + \cos(2 \pi y ) + \frac12\cos(4 \pi y), \\
p_2(y) &= 10 + 2\cos(2 \pi y).
\end{aligned}
\]
The tracer source is described by
$$g(x, y) = \sum_{k=1}^{16} 10e^{-100((x-v_k)^2 + (y-w_k)^2)}$$
where the source injection locations $(v_k, w_k)$ are arranged in a $4\times4$ grid as depicted by the diamonds in 
Figure~\ref{fig:source_loc}.

\begin{figure} [ht!]
	\centering
	\includegraphics[width=0.45\linewidth]{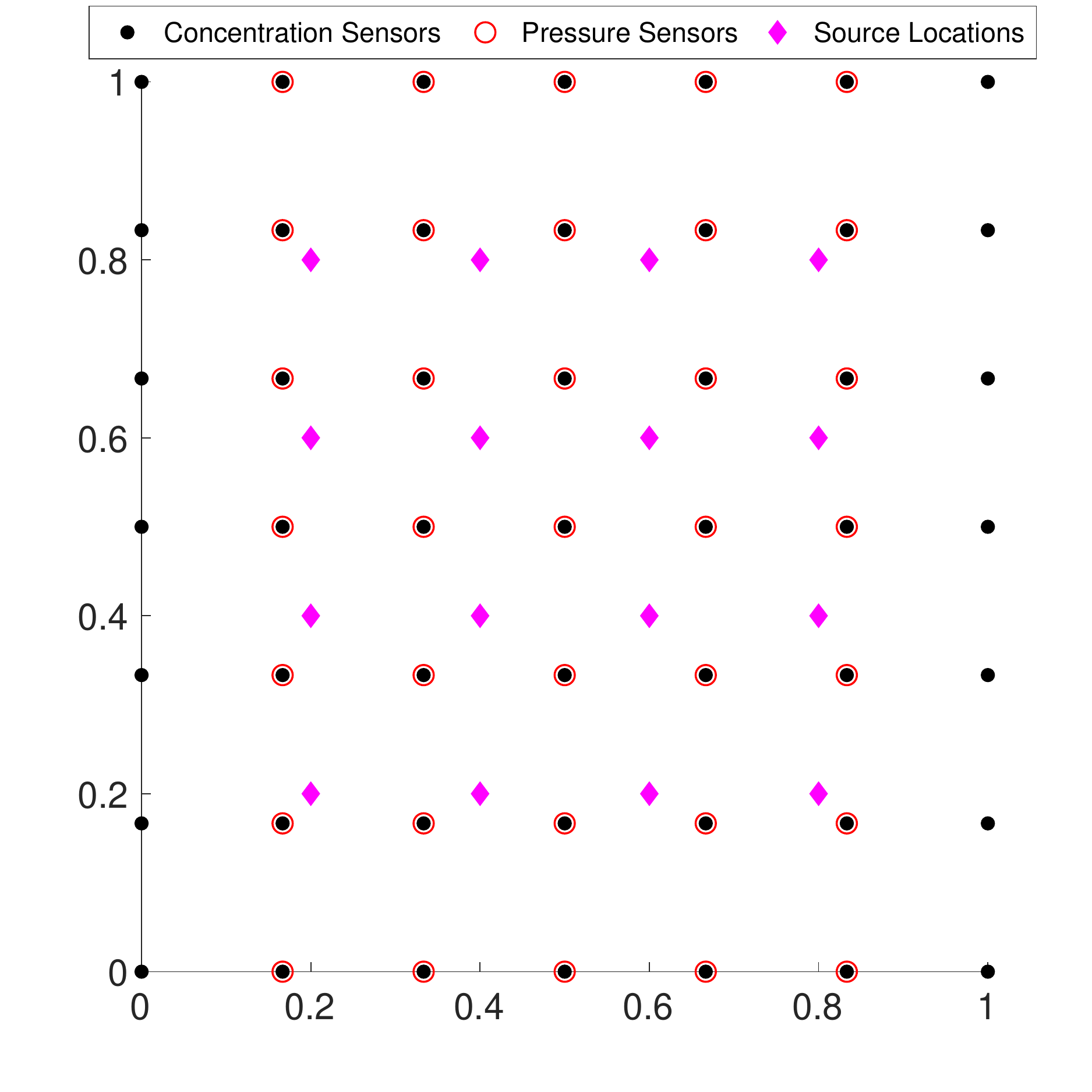}
	\caption{Concentration sensor, pressure sensor, and source locations.}
	\label{fig:source_loc}
\end{figure}
\FloatBarrier

We seek to solve an inverse problem to reconstruct the log-permeability field
$m$, using pressure and concentration 
measurements. Let $\mathcal Q$ denote the 
observation operator and
$\vec{y} \in \mathbb{R}^{n}$ be a vector of $n_p$ pressure measurements and $n_c$ concentration measurements at 
$n_t$ measurement times, giving a total of
 $n = n_p + n_cn_t$ data points,
$$ \vec{y} = \begin{bmatrix}
p_1 & p_2 & \dots & p_{n_p} & c_1 & c_2 & \dots & c_{n_cn_t} \end{bmatrix}. $$ 
The observation (sensor) locations are depicted in Figure~\ref{fig:source_loc}.

We consider the inverse problem
$$ \min_{m} \hat{J}(m) := \ \frac12\| \mathcal Q \Aobj(m)-\vec{y}\|_{\mat{W}}^2 
+ \frac{\alpha}{2}\int_{\Omega}\|\nabla m\|_2^2 \ dx $$
where $\mathcal A$ is the solution operator for \eqref{equ:pde}, 
$$ \mat{W} = \begin{pmatrix}
\frac{1}{\overline{p}^2\sigma^2} \mat{I}_{n_p} & 0 \\
0 & \frac{1}{\overline{c}^2\sigma^2} \mat{I}_{n_cn_t} 
\end{pmatrix} $$ 
is a data misfit weight matrix and $\alpha$ is the regularization coefficient.
We used $\alpha = 3 \times 10^{-2}$ in our numerical experiments; this was
chosen based on numerical experimentations seeking a 
regularization coefficient that is large enough to mitigate ill-posedness 
and at the same time produces a reasonable parameter reconstruction.
The weight matrix divides each measurement by the measurement noise 
$\sigma$ and the average of 
its data type ($\overline{p}$ and $\overline{c}$ respectively) to ensure 
the two data types, which are on different scales, have equivalent importance in the 
data misfit term.

We synthesize data for this problem with additive Gaussian noise that perturbs the data with
a standard deviation of 3$\%$ of the true value, i.e. $\theta_e^i \sim \mathcal{N}(0,\sigma), 
\text{ with } \sigma = 0.03$. Note that we assume pressure and concentration sensors 
have
the same proportional measurement error, $\sigma_p = \sigma_c = 0.03$. 

The inverse problem is solved on a 55$\times$55 finite element spacial discretization with 
48 time steps, while the data is generated from a forward PDE solve with a 109$\times$109 
finite element spacial discretization and 98 time steps.
We use a completely uninformed, constant 0 initial guess, with 49 concentration sensors 
and 35 pressure sensors arrayed throughout the domain as depicted in Figure~\ref{fig:source_loc} to solve the inverse problem. 

\section{Computational Results}
\label{sec:sens_results}

Using the model presented in Section~\ref{sec:model_prob}, we solve the inverse problem and
compute sensitivity indices to determine which parameters and data sources are most important.
Figure \ref{fig:permfield} depicts the 
true log permeability field we seek to reconstruct through the inverse problem, and 
the reconstructed solution found by solving the inverse problem with a 
truncated CG trust region solver. Note that since the optimization problem is non-convex we are only guaranteed to find a local minimizer.

\begin{figure} [ht!]
	\centering
	\includegraphics[width=0.7\linewidth]{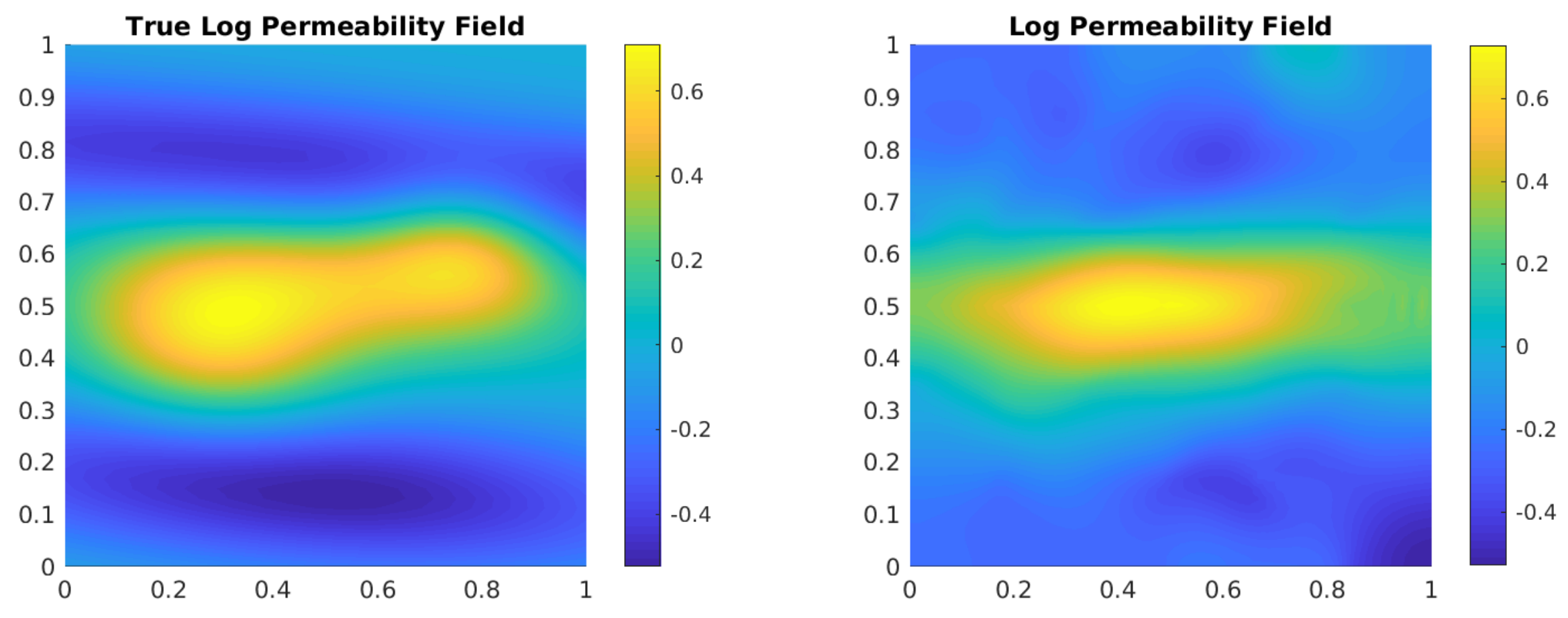}
	\caption{Left: True Permeability $\quad$ Right: Reconstructed Solution.}
	\label{fig:permfield}
\end{figure}
\FloatBarrier

In Subsection~\ref{subsec:gen_HDSA} we display and interpret generalized sensitivity indices 
for the pressure data, 
concentration data, tracer source term, diffusion coefficient, and the left and right pressure 
Dirichlet boundary conditions. In Subsection~\ref{subsec:exp_HDSA} we analyze the 
pointwise sensitivities with
respect to the experimental parameters (pressure and concentration data), and in Subsection~\ref{subsec:aux_HDSA} we analyze the pointwise 
sensitivities with respect to auxiliary parameters 
(source term, pressure Dirichlet boundary conditions, and diffusion coefficient). General discussion
of the importance and interpretation of the sensitivities is presented in Subsection~\ref{subsec:disc}. 

We model uncertain parameters as a nominal value times a parameterized 
perturbation. When our parameters of interest are constants, such as data measurements or 
modeling coefficients, we can model uncertain parameters as
\begin{equation*}
\label{eqn:param_perturb}
d = \tilde{d}(1+a\theta),
\end{equation*}
where $d$ is the parameter of interest, $\tilde{d}$ is the nominal value, 
$a$ is a scaling coefficient,
and $\theta \in [-1,1]$ the parameterization of the perturbation. 
In practice, we compute the sensitivity with $\theta = 0$, which corresponds to computing the
sensitivity at the nominal parameter value $\tilde{d}$. Extensions to global sensitivity analysis may consider sampling $\theta$ in $[-1,1]$.
The scaling coefficient $a$
is problem dependent, and should be set based on prior knowledge of the level of uncertainty in the
parameter of interest. When the parameter is a spatially and/or temporally distributed, we model uncertainty in the function using a
linear combination of basis functions
$$ f(x) = \tilde{f}(x)\Big(1+a\sum_{i=1}^{L}\theta_i\phi_i(x) \Big) $$
where $\tilde{f}$ its nominal estimate, $L$ the
dimension of the discretized basis, and $\{\phi_i\}$ are basis functions. The results in this article take
$\phi_i$ as linear finite element basis functions defined on a coarser mesh than the PDE is solved on (to enforce smoothness in perturbations).
For this model problem we let $a = 0.05$ for the 
experimental parameters and $a = 0.2$ for the auxiliary parameters which indicates
$5\%$ uncertainty in the data (experimental parameters) and $20\%$ uncertainty in the auxiliary parameters.

\subsection{Generalized Sensitivity Results} \label{subsec:gen_HDSA}
We calculate the generalized sensitivities for each parameter type, which allows for 
comparison between their relative importance. These 
are plotted in Figure~\ref{fig:Gen_sensitivities}.

\begin{figure} [ht!]
	\centering
	\includegraphics[width=0.48\linewidth]{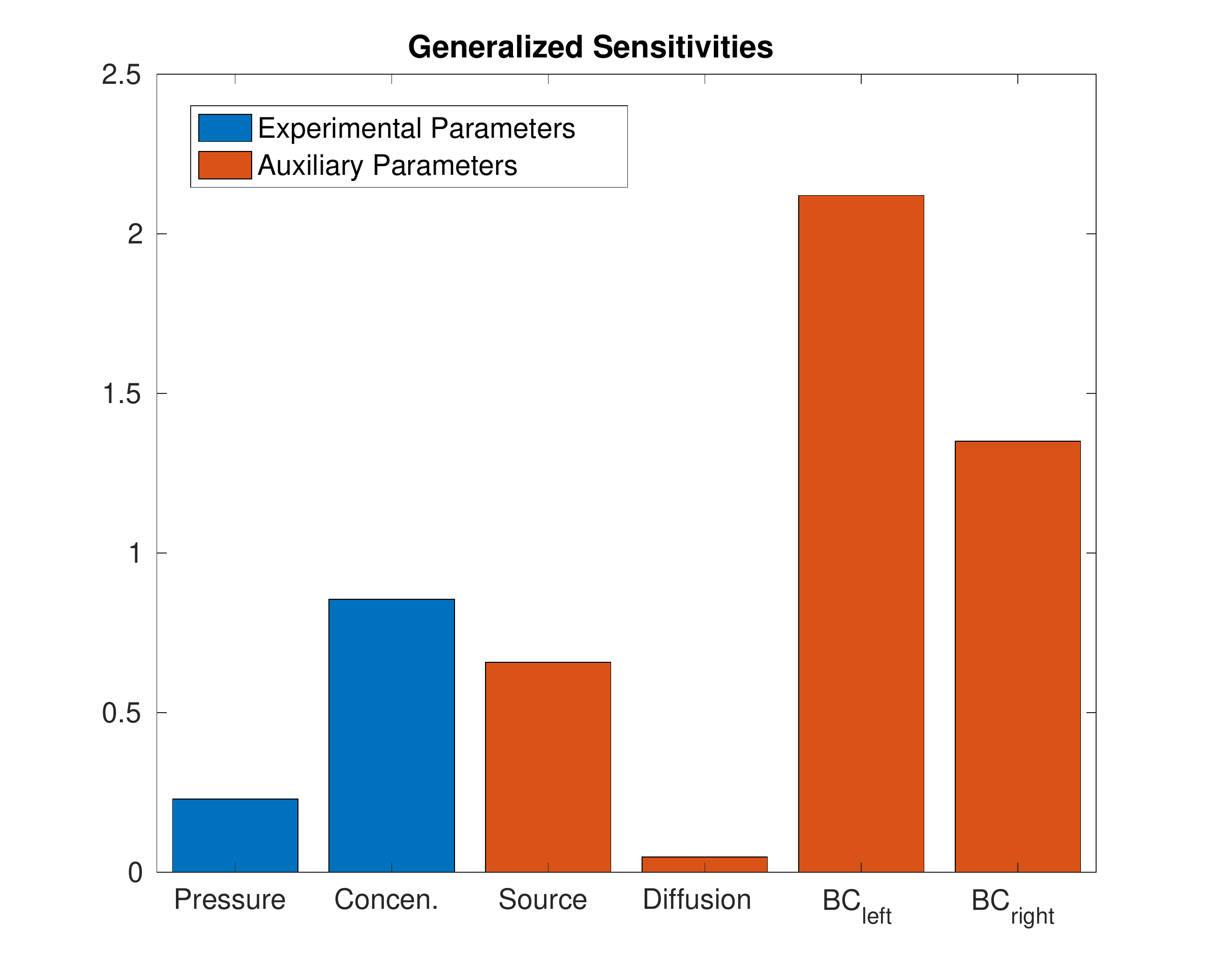}
	\caption{Bar graph of generalized parameter sensitivities.}
	\label{fig:Gen_sensitivities}
\end{figure}
\FloatBarrier

From Figure~\ref{fig:Gen_sensitivities}, we can see that for this specific model problem it is
most important to accurately measure the left and right boundary conditions. This makes sense 
intuitively, as the boundary conditions drive the fluid flow and the problem is advection dominated. 
We can also tell that
in terms of data collection, it is more important to accurately measure concentration than pressure
and that measuring the diffusion coefficient with a high degree of accuracy is relatively unimportant.

We consider the left boundary condition as an example and illustrate the interpretation of its generalized sensitivity index. Such principles of interpretation may be extended to any other generalized sensitivity indices but are omitted for conciseness. Figure~\ref{fig:perturbation} displays the true left boundary condition and the perturbation of the
left boundary condition corresponding to the generalized sensitivity index (the argument of the maximization in \eqref{eqn:gen_sens}. The perturbation plotted in
Figure~\ref{fig:perturbation} is the unit norm perturbation that results in the maximum change
in the inverse problem's solution. Thus, the generalized sensitivity index 
$S_5 = 2.12$ indicates that this unit norm perturbation will result in
a change of about 2.12 in the norm of the solution of the inverse problem. Scaling by the norm of the solution of the inverse problem gives an interpretation that the unit norm perturbation shown in Figure~\ref{fig:perturbation} results in approximately a 
$14\%$ change in the solution of the inverse problem. With this interpretation, a user may associate a level of uncertainty in the boundary condition and the resulting change in the estimated permeability field to determine if further calibration is needed.

\begin{figure} [ht!]
	\centering
	\includegraphics[width=0.4\linewidth]{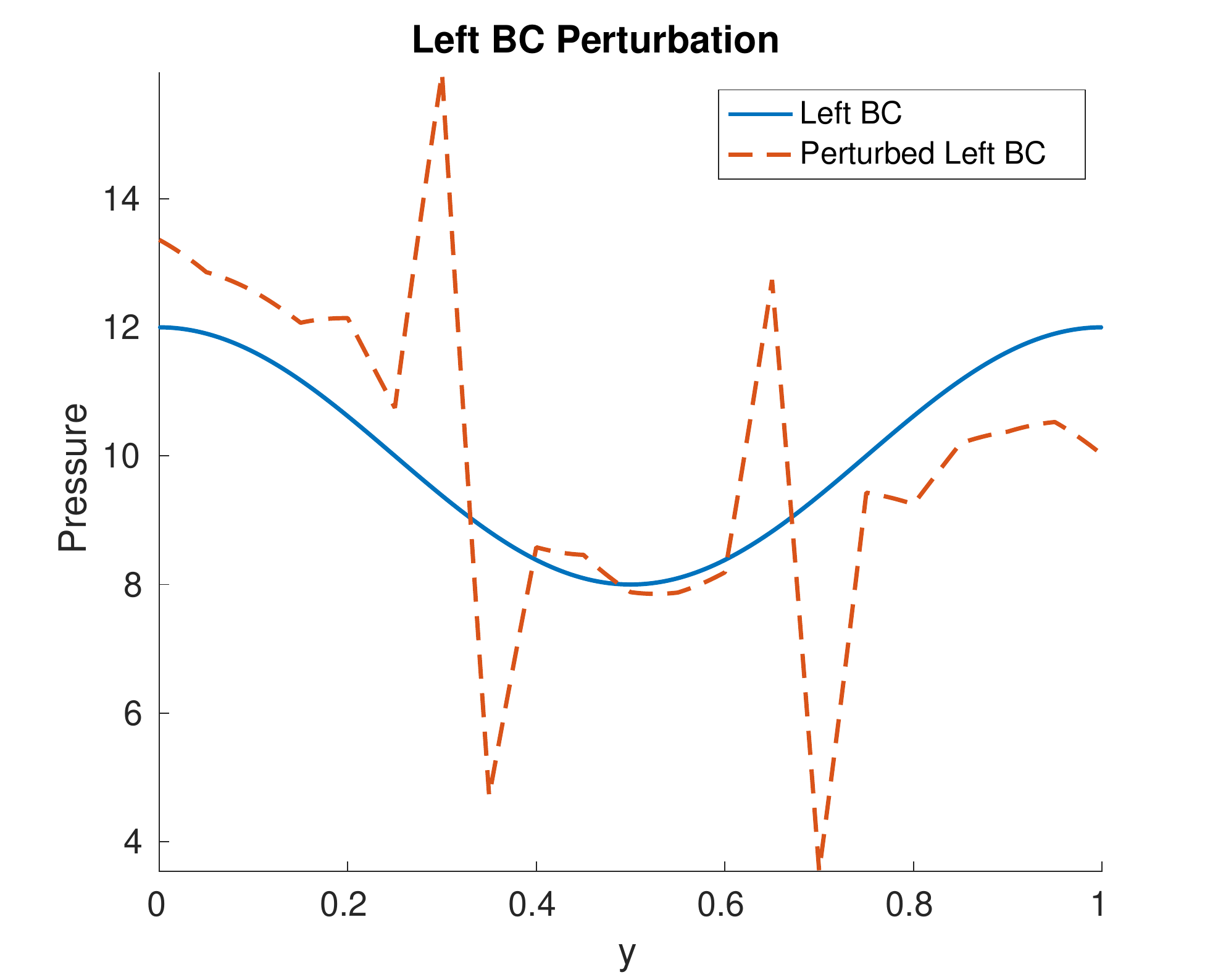}
	\caption{Perturbation of the left pressure Dirichlet boundary condition.}
	\label{fig:perturbation}
\end{figure}
\FloatBarrier

\subsection{HDSA with Respect to Experimental Parameters} \label{subsec:exp_HDSA}
In this section, we turn to the pointwise hyper-differential sensitivities \eqref{eqn:ind_sens} 
to study the spatial and temporal dependence within the experimental parameters. 
Using the reconstructed log-permeability field, we compute the sensitivities of the solution with
respect to both pressure and contaminant measurements at each sensor location, and for concentration, each time step. Figure 
\ref{fig:Contaminant_sensitivities} shows the spatial distribution of contaminant sensitivities 
(depicted by colored points using the right colorbar scale) at 
informative time snapshots, overlaid atop the tracer concentration field 
(depicted by a greyscale concentration map using the left colorbar scale). 
By overlaying these plots, we are able to study how the sensitivities 
relate to the tracer advection. 

\begin{figure} [ht!]
	\centering
	\includegraphics[width=0.47\linewidth]{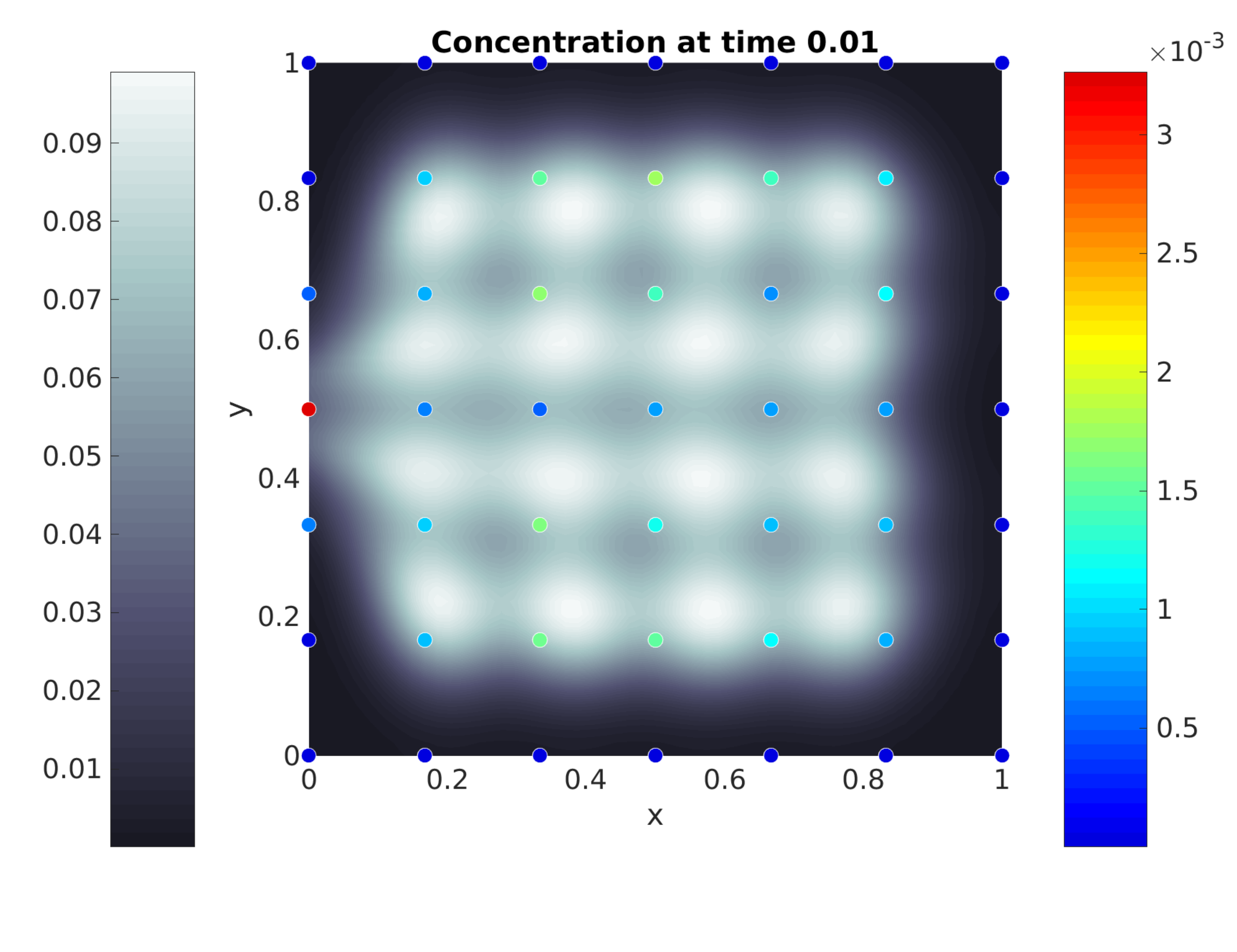}
	\includegraphics[width=0.47\linewidth]{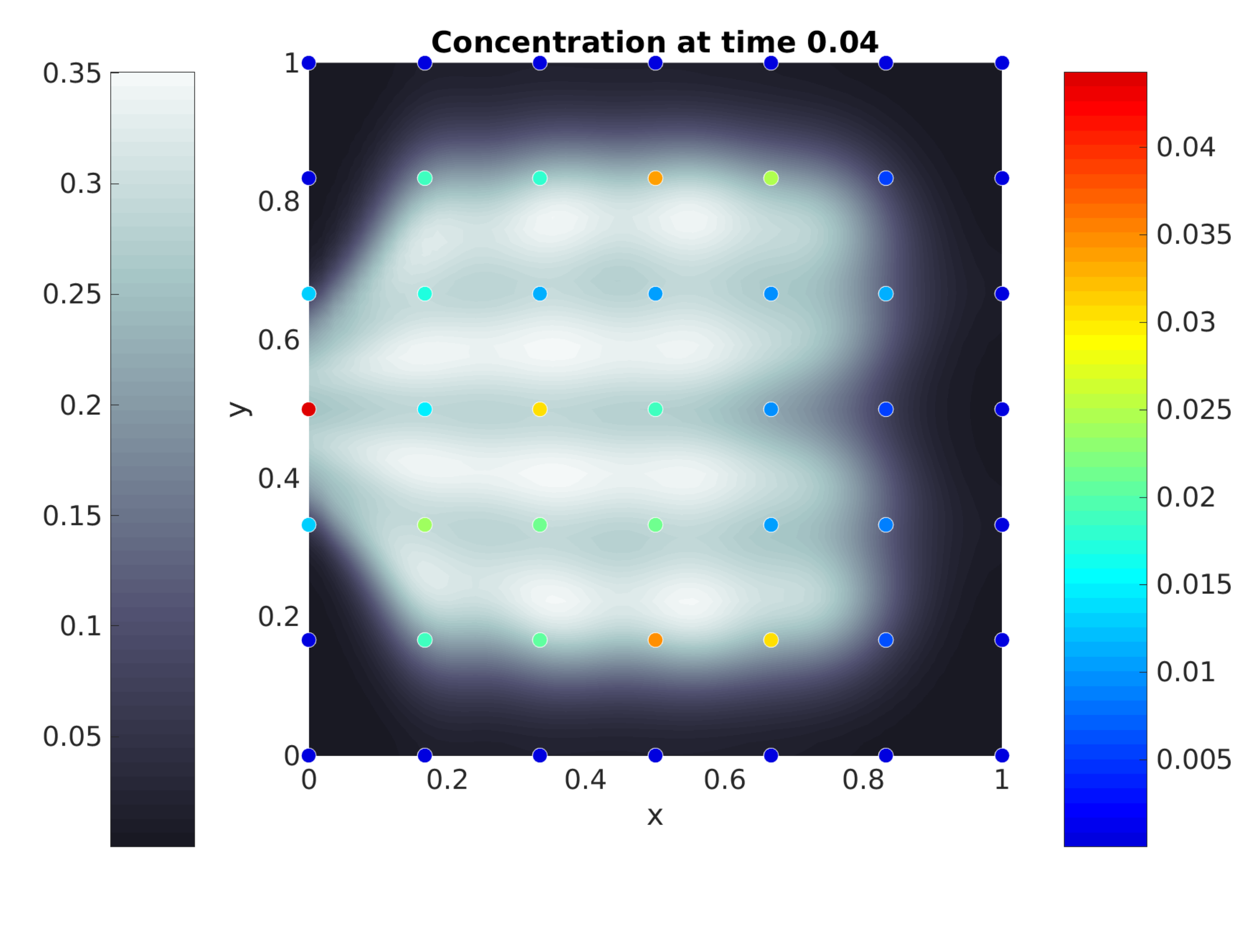}
	\includegraphics[width=0.47\linewidth]{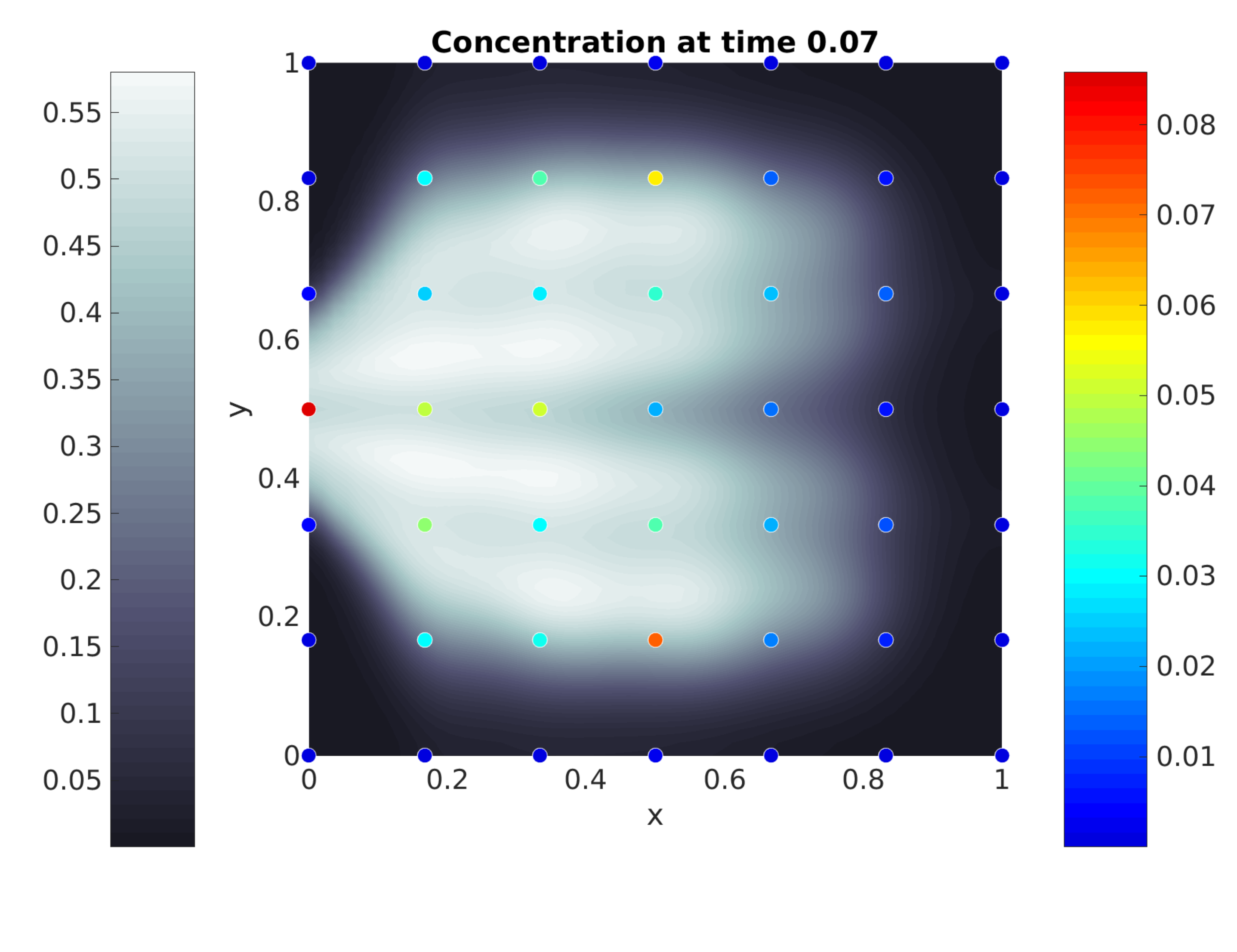}
	\includegraphics[width=0.47\linewidth]{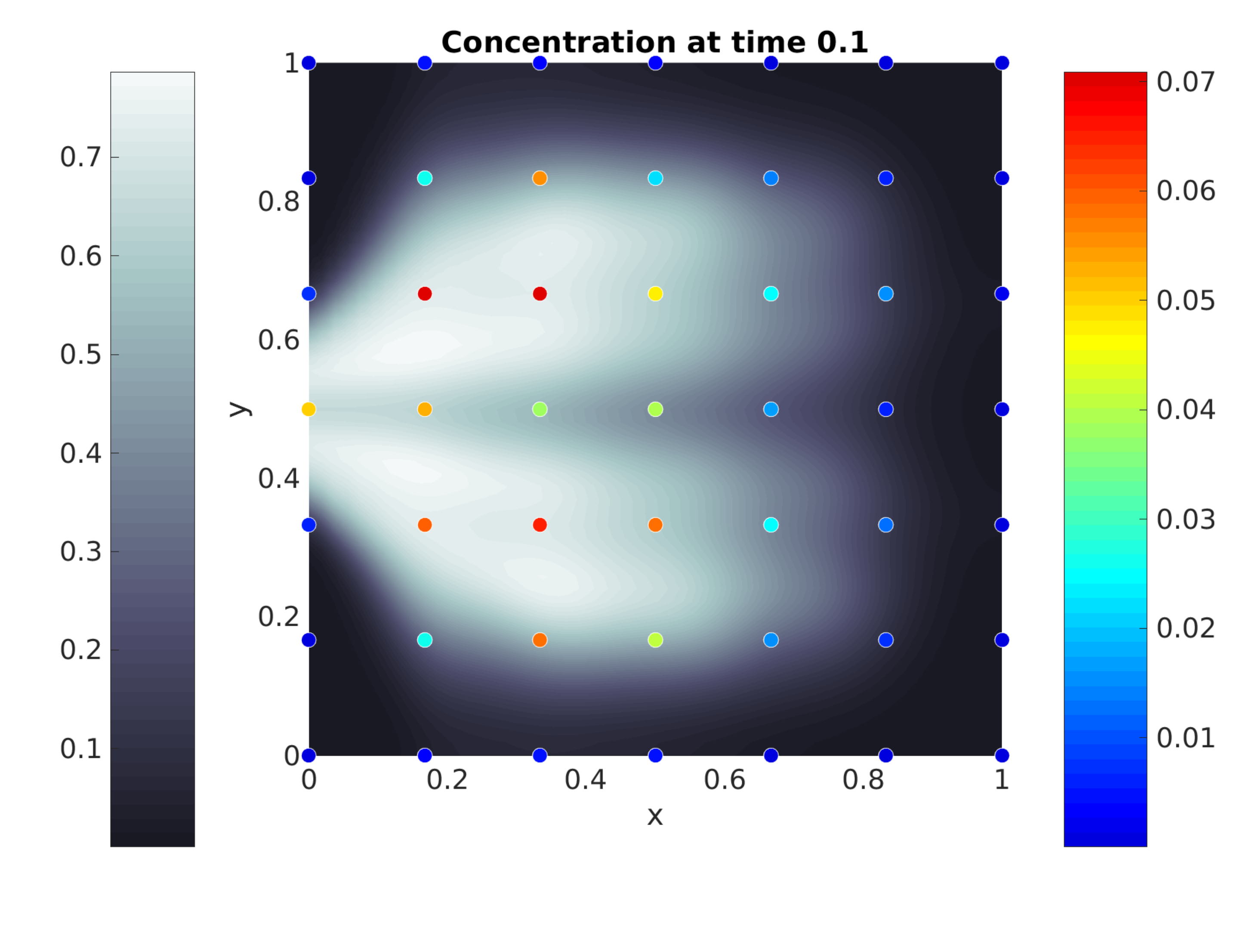}
	\includegraphics[width=0.47\linewidth]{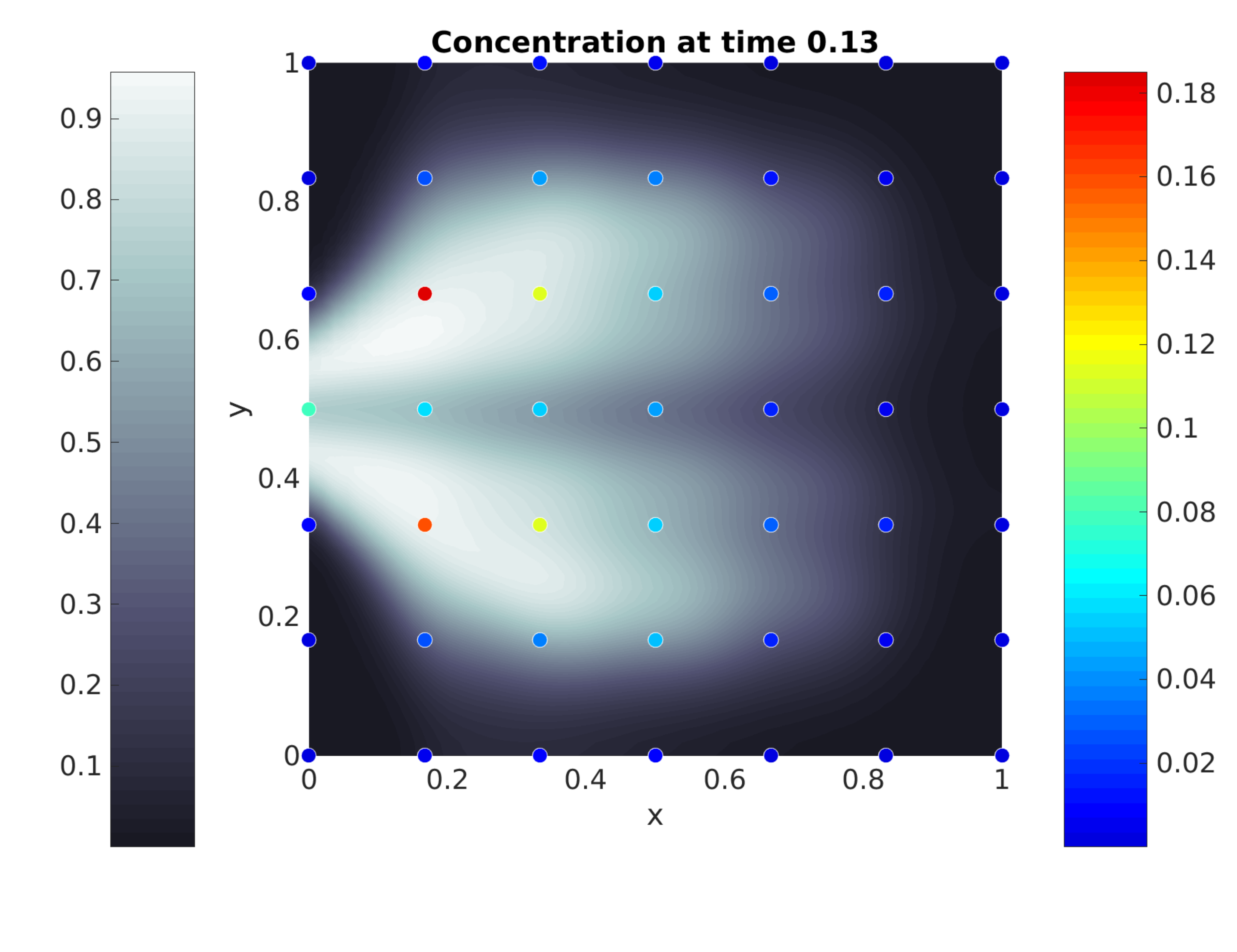}
	\includegraphics[width=0.47\linewidth]{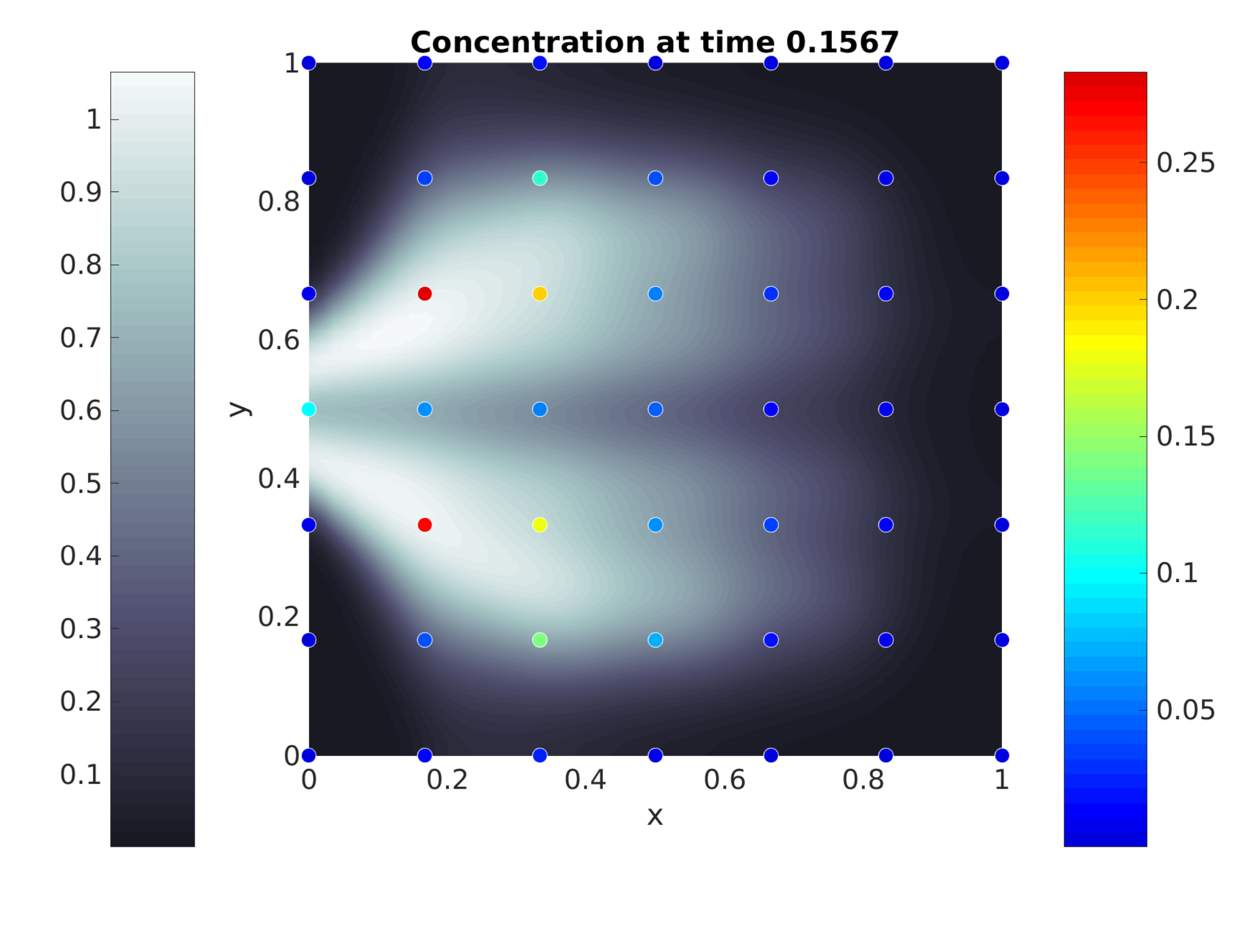}
	\caption{Concentration sensitivities at times 0.01, 0.04, 0.07, 0.10, 0.13, and 
                 0.1567.}
	\label{fig:Contaminant_sensitivities}
\end{figure}
\FloatBarrier
\clearpage

Note that both the sensitivity and concentration 
colorbar scales change in each plot, which is to allow the reader to visualize 
the results more clearly. 
We make the following observations about the concentration sensitivities.
(1)
As a general trend, the sensitivity of concentration increases in
time.  This is because the continuous source injection increases the amount of
tracer in the domain as time progresses, making concentration sensors
increasingly important. 
(2) We also see that as the mass of high tracer concentration
(depicted by bright white in the color map)
moves, sensors that observe this change in mass have increased importance
while the mass moves toward or away from the sensor and then 
decrease in importance after the  mass has moved passed. This phenomenon 
is particularly noticeable from the sensors in the high permeability channel 
at $y = 0.5$.
As the tracer mass moves from right to left at $y = 0.5$ the sensor's importance
increases following the back edge of the mass, and then decreases after the mass moves past. 
(3) We also notice that the sensor on the left boundary at $y = .5$
is highly important early in time and the sensors at $(0.2, 0.33)$ and $(0.2,0.66)$ are more important 
later in time. This is because the
majority of the tracer is getting advected toward those sensors which are therefore observing a large amount of tracer flow. 

To further study the concentration sensitivities, Figure~\ref{fig:sensitivities_timescatter} depicts the time evolution of each concentration sensitivity in
an array of plots. Each plot describes the time evolution of a single contaminant 
sensitivity and they are spatially arranged to correspond to the sensor locations they depict (compare with Figure~\ref{fig:source_loc} for their spatial association).

\begin{figure} [ht!]
	\centering
	\includegraphics[width=0.5\linewidth]{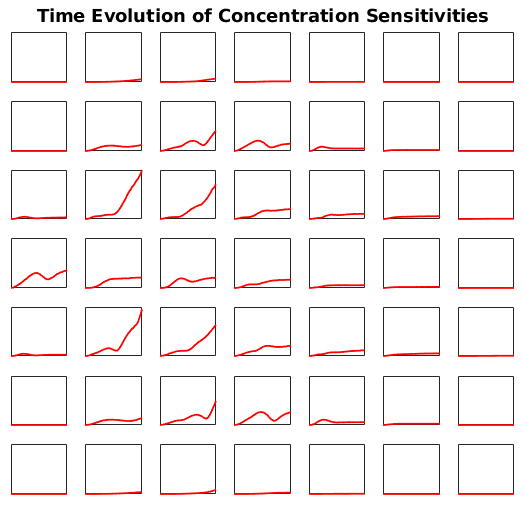}
	\caption{A spatial distribution of the time evolution of contaminant sensitivities. Each subplot has the same horizontal axis range depicting time from 0 to .16. Each vertical axis subplot has the same range depicting sensitivity from 0 to .2832}
	\label{fig:sensitivities_timescatter}
\end{figure}
\FloatBarrier

From Figure~\ref{fig:sensitivities_timescatter} we can see that some of the sensitivities decrease 
in importance over time, or have a range of time during which they decrease before 
they begin to increase again. Sensors for which the tracer permanently moves away 
experience a long-term decrease in importance.
The decrease in sensitivity at a sensor location, followed by a subsequent increase is likely
caused by the movement of tracer through the high permeability 
region at $y = 0.5$. As the tracer empties out of this central region, the concentration changes from being a 
single mass to being two distinct masses, one in the upper portion and the other in 
the lower portion of the domain. This splitting of the concentration mass,
when observed by a nearby sensor, likely causes a minor disturbance in the general 
trend of the sensitivity. 

Because this problem is advection dominated, the movement of the tracer through the domain 
has a large impact on the interpretation of the pressure sensitivities as well. Figure 
\ref{fig:Pressure_sensitivities} shows the spatial distribution of pressure sensitivities 
(depicted by colored points using the right colorbar scale) overlaid
atop the pressure field (depicted by a greyscale pressure map using the left 
colorbar scale). 

\begin{figure} [ht!]
	\centering
	\includegraphics[width=0.48\linewidth]{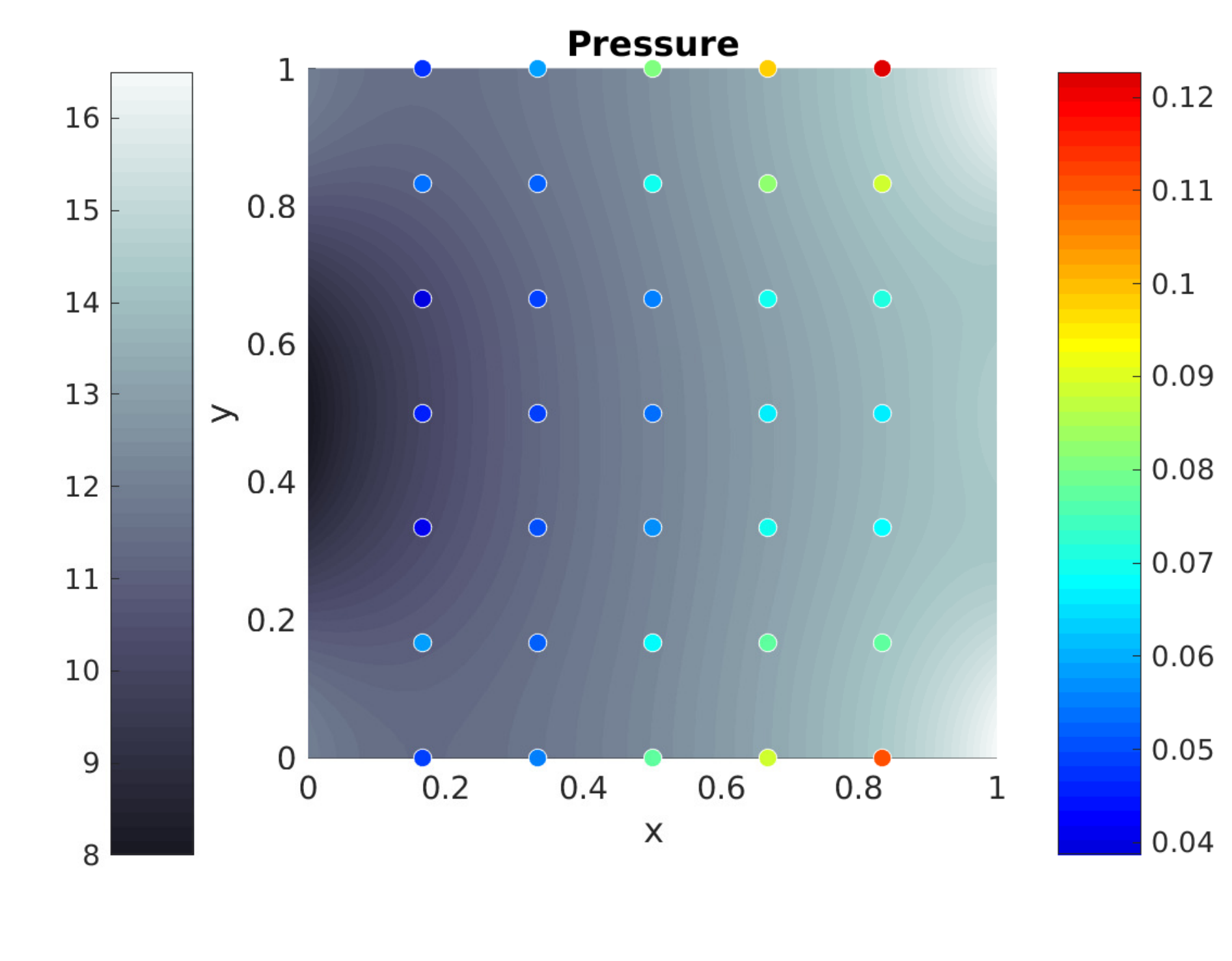}
	\caption{Pressure sensitivities}
	\label{fig:Pressure_sensitivities}
\end{figure}
\FloatBarrier

First, notice that the pressure sensitivities are larger than the concentration sensitivities at early time 
steps, but are eventually surpassed by the steadily increasing concentration sensitivities over time.  
Also observe that the sensors with highest pressure sensitivity, 
are in the upper right and lower right corners of the domain. Because the tracer moves
from right to left, the majority of information about the tracer advection is found
on the left side of the domain.
The right side of the domain, and particularly the low 
permeability regions at the top and bottom, have a lack of tracer flow information.
Thus, the inverse problem relies heavily on the pressure measurements in these regions 
to reconstruct
the permeability field, which corresponds to the higher pressure sensitivities in this region. 

\subsection{HDSA with Respect to Auxiliary Parameters} \label{subsec:aux_HDSA}
In this section, we study the pointwise 
hyper-differential sensitivities with respect to auxiliary parameters:
the source term and pressure Dirichlet boundary conditions. The diffusion coefficient is also an
auxiliary parameter, however according to the generalized sensitivities it is relatively unimportant so we will not investigate it further. 

We begin by
analyzing the sensitivities with respect to the source term. 
Figure~\ref{fig:source_sensitivities} depicts the sensitivities 
of the source at each injection well, next to a plot of the Darcy velocity field. 
The source uncertainty is discretized by taking a $3 \times 3$ mesh 
locally around each well. This models our uncertainty in the rate and distribution of the
injected tracer at each well due to hardware limitations, while the location of the injected tracer
is known.

\begin{figure} [ht!]
	\centering
	\includegraphics[width=0.48\linewidth]{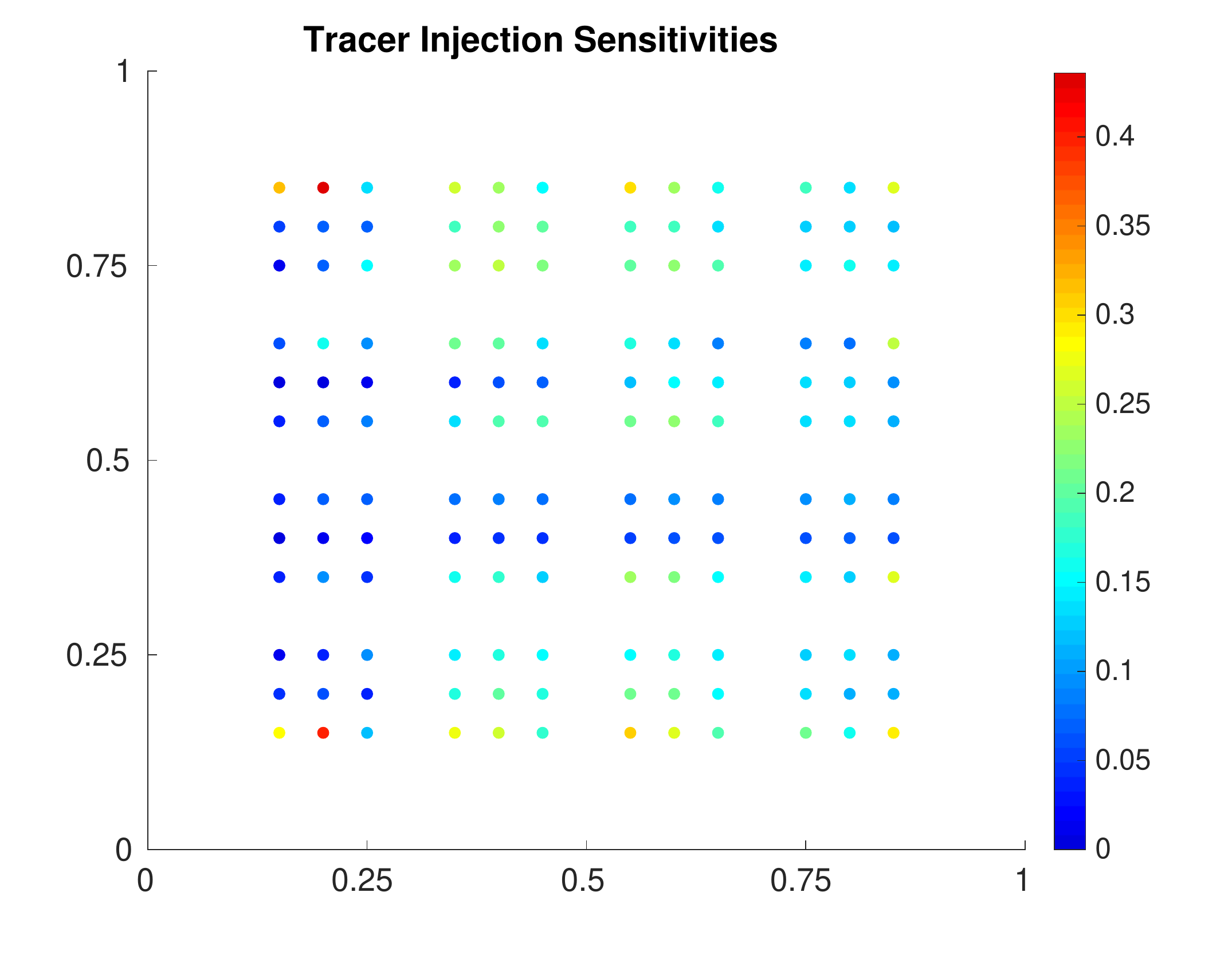}
	\includegraphics[width=0.51\linewidth]{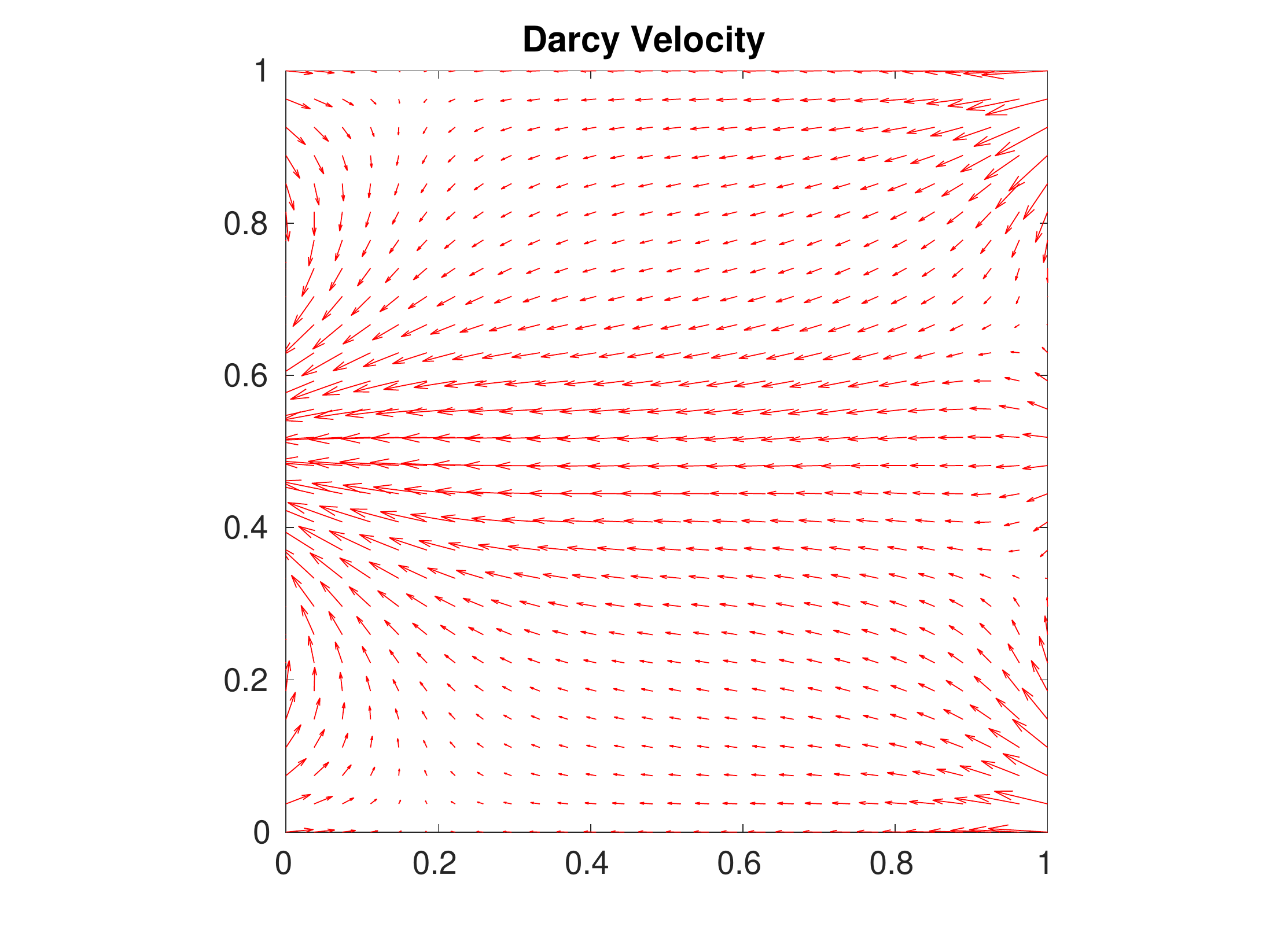}
	\caption{Left: Source sensitivities, \quad Right: Darcy Velocity Field}
	\label{fig:source_sensitivities}
\end{figure}
\FloatBarrier

From Figure~\ref{fig:source_sensitivities} we can see that the areas of high source sensitivity 
generally occur in regions with low Darcy velocity. This is likely because
in these regions, the problem is diffusion dominated and the tracer is not advected away from the injection site very quickly. Thus, if the source injection is perturbed, the
tracer will stay in that region and slowly diffuse, affecting concentration measurements in that area
for may time steps. If a source injection is perturbed in a region of high Darcy velocity, the 
tracer will be pulled away and mix with the rest of the tracer moving through the domain. 
Thus for this problem, the source injections have highest sensitivity in regions that are 
diffusion dominated.

According to the generalized sensitivities, the boundary conditions have the largest relative 
impact of any uncertain parameter on the solution. To further understand
the influence of the boundary conditions on the physical systems in the model, we consider
the hyper-differential sensitivities of the boundary conditions. Figure \ref{fig:BC_sensitivities}
depicts the sensitivities of the solution with respect to the pressure Dirichlet boundary 
conditions, discretized by 21 equally spaced nodes on each boundary. 

\begin{figure} [ht!]
	\centering
	\includegraphics[width=0.48\linewidth]{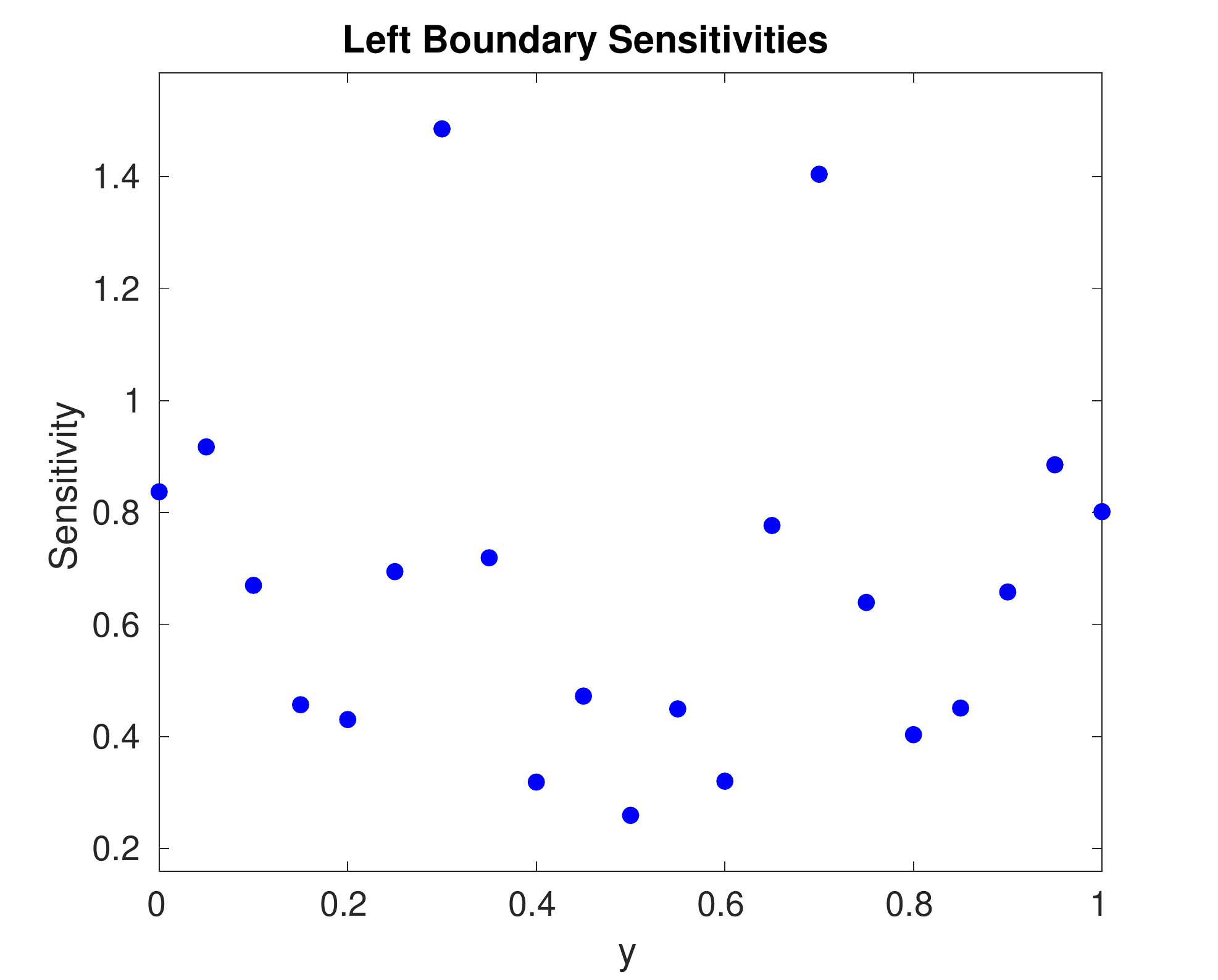}
	\includegraphics[width=0.48\linewidth]{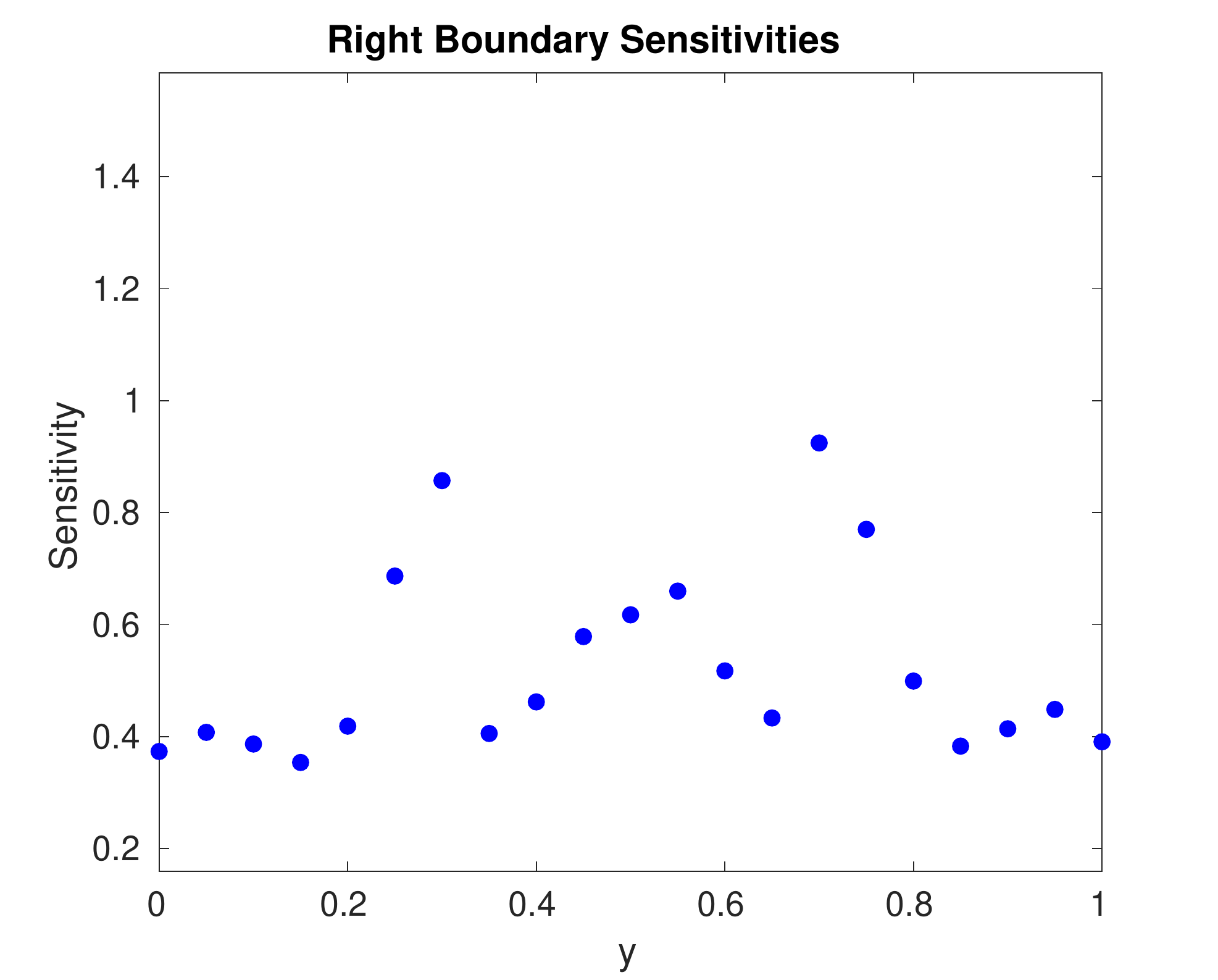}
	\caption{Pressure Dirichlet boundary condition sensitivities}
	\label{fig:BC_sensitivities}
\end{figure}
\FloatBarrier

From the boundary condition sensitivities in Figure~\ref{fig:BC_sensitivities} 
we can see that on both the left and right boundary, there is a heightened sensitivity 
around $y = 0.3$ and $y = 0.7$. This corresponds
to the area between the high permeability region through the middle of the domain, and the low 
permeability regions above and below it. A perturbation of the boundary conditions near the low 
permeability region is going to have minimal effect because the low permeability region 
is going to keep the Darcy velocity small relative to the rest of the domain regardless. 
Similarly, a perturbation of the boundary conditions in the high 
permeability region is going to have little effect because the high permeability is going to keep the
Darcy velocity relatively high in that area. 
A perturbation in the pressure boundary conditions will have maximal
impact in the thin region between the high and low permeability regions. This perturbation 
can cause the 
region of moderate Darcy velocity to become a region of either high or low Darcy velocity
relative to the other regions of the domain, significantly impacting the advection flow. 

\subsection{Discussion} \label{subsec:disc}
It is evident from our analysis that the hyper-differential sensitivities can provide a 
wealth of information about how the solution of the inverse problem depends on 
the interactions within the governing physics systems. These observations
are not readily apparent without the sensitivities, which emphasizes their usefulness in 
understanding the inverse problem. In particular, the sensitivities with respect to experimental
parameters can be used to determine where and when to place expensive, high fidelity sensors in
an experimental design, and where less accurate and more cost effective sensors can be uesd.

In addition to providing information about the underlying physics in a model,
these observations allow experimenters to determine how to design experiments, by prioritizing the measurement and estimation of all complementary parameters considered. 
For this problem, the generalized sensitivities inform us that tracer concentration is more
important to measure accurately than pressure, which informs the design of sensors and data 
collection techniques. We also learn that accurately estimating the pressure Dirichlet boundary
conditions is highly important, while the diffusivity coefficient is relatively unimportant. This
information informs the model specification and how these parameters should be estimated 
and considered in the model. 

\section{Conclusion}
\label{sec:conclusion}
In this article, we have presented a mathematical framework for
hyper-differential sensitivity analysis in the context of inverse problems
constrained by multiphysics systems of partial differential equations. The
mathematical formulation involves derivative based local sensitivity analysis
of the solution of an inverse problem with respect to perturbations of
parameters.  This framework is general and can be applied to a wide variety of
inverse problems. The usefulness of HDSA is most apparent in the context of
complicated multiphysics systems with many uncertain parameters. By introducing
sensitivity analysis with respect to experimental parameters and maturing the
generalized hyper-differential sensitivity indices, we have enabled analysis of the
relative importance of both auxiliary model parameters and experimental
parameters, such as data sources. Studying hyper-differential sensitivities
provides new insights into both understanding the underlying physical systems
of a model, and designing experiments to solve inverse problems.  In
particular, comparing the relative importance of spatially and temporally
distributed measurements with various sensor types provides unique insights
that are difficult to attain using traditional experimental design
methodologies. HDSA compliments experimental design by providing a systematic
way to compare multiphysics parameters and data sources.

HDSA is an emerging technology and as such there are several important considerations
to be studied in future work. One questions is 
``how robust are the sensitivities of experimental parameters to perturbations of design?"
In practice, one may not be able to place a sensor exactly where a design indicates it 
should be. If a
sensor's location is perturbed within a local area, how will this affect the magnitude of the
sensitivity, and that of its neighboring sensors? Ideally, perturbing the location of a sensor 
slightly will have a minimal impact on the sensitivity at that sensor and its neighbors, 
indicating that the hyper-differential sensitivities are robust to perturbations of the design,
but this has yet to be rigorously verified. 

Moreover, HDSA of inverse problems requires availability of measurement data.
For the computational results in this paper, we assumed that we had some set of
experimental data, but in many applications we would like to compute
sensitivities a priori, before data is collected.  In such cases, one could
generate training data by applying the forward model to a sample of the
inversion parameters drawn from a prior distribution, giving rise to a
distribution of the sensitivities.

\appendix
\section{Details for the model inverse problem~\eqref{equ:ip_discrete}}\label{mtd:adj_op}
First, we illustrate the computation of gradient and Hessian of $\hat{J}$. 
To facilitate this, we introduce the Lagrangian
\[
\mathcal{L}(\uu,\m,\lam,\thth) = \hat{J}(\m) + \lam^\top(\mat{L}(\m)\uu - \V\thth),
\]
where $\vec{\lambda}$ is a Lagrange multiplier (vector). 
To compute the gradient of $\hat{J}$, using the so called formal Lagrange 
approach, we consider the variations of $\mathcal{L}$ with respect to 
$\vec{\lambda}$, $\vec{u}$ and $\vec{m}$. Note that
\[
\LL_{\lam}(\uu,\m,\lam,\thth) = \mat{L}(\m)\uu - \V\thth \quad \text{and} \quad
\LL_{\uu}(\uu,\m,\lam,\thth)  = \dJdu + \lam^\top \mat{L}(\m). 
\]
Setting these variations equal to 0  
results in the state and the adjoint equations:
\[
\mat{L}(\m)\uu = \V\thth \quad\text{and}\quad \mat{L}(\m)^\top\lam = -\mat{Q}^\top\mat{W}(\mat{Q}\uu-
\vec{y}).
\]
Then, the gradient $g(\vec m)$ of $\hat{J}$ satisfies, 
$g(\vec m)^T = \LL_{\m}(\uu,\m,\lam,\thth) = \dJdm + \lam^\top\dAdm\uu$; therefore,
\[
g(\m) = \alpha\mat{R}\vec{m} + \mat{C}^\top\lam \quad \text{with} \quad \mat{C} = 
\frac{\partial}{\partial \m}\big(\mat{L}(\m)\uu\big).
\]
To compute the action of the Hessian $\mat{H}(\m)$ of $\hat{J}$ (at $\vec{m}$) on a vector $\widehat\m$, 
we differentiate through the 
directional derivative $\langle g(\vec{m}), \widehat{\m}\rangle$. This is faciliated by introducing the 
``meta-Lagrangian'':
\begin{multline*}
\LL^H(\uu,\m,\lam,\thth,\widehat{\uu},\widehat{\lam};\widehat{\m}) = 
\widehat{\m}^\top\big[\mat{C}^\top\lam +\alpha\mat{R}\m\big] + 
\widehat{\lam}^\top\big[\mat{L}(\m)\uu - \V\thth\big] + \\ 
\widehat{\uu}^\top\big[\mat{L}(\m)^\top\lam + \mat{Q}^\top\mat{W}(\mat{Q}\uu- \vec{y})\big].
\end{multline*}
The Lagrange multipliers $\widehat{\vec{u}}$ and $\widehat{\vec{\lambda}}$ are refered to as the 
incremental state and adjoint variables; see e.g.,~\cite{Petra11}.
Letting the variations of $\mathcal{L}^H$ with respect to $\widehat{\vec{\lambda}}$ 
and $\widehat{\vec{u}}$ vanish gives 
\begin{align}
\mat{L}(\m)\widehat{\uu} &= -\mat{C}\widehat{\m}, \quad &\text{(incremental 
state equation)}\label{equ:inc_state}\\
\mat{L}(\m)^\top\widehat{\lam} &= -\LL_{\uu\m}\widehat{\m} - \LL_{\uu\uu}\widehat{\uu}.
\quad &\text{(incremental adjoint equation)}
\label{equ:inc_adjoint}
\end{align}
The Hessian apply is then given by, $\left[\mat{H}(\m)\widehat{\m}\right]^\top = \LL_{\m}^H\widehat\m$, resulting in
\begin{equation}\label{equ:hess_apply}
\mat{H}(\m)\widehat{\m} = \mathcal{L}_{\m\m}\widehat{\m} + \LL_{\m\uu}\widehat{\uu} + \mat{C}^\top\widehat{\lam}.
\end{equation}
Note that in the above equations
\[
    \mathcal{L}_{\uu\uu} = \mat{Q}^\top\mat{W}\mat{Q}, \quad
    \mathcal{L}_{\m\m} = \alpha \mat{R} + \frac{\partial}{\partial \m}(\mat{C}^T \vec{\lambda}),
    \quad \text{and} \quad \mathcal{L}_{\m\uu} = \mathcal{L}_{\uu\m}^\top=\frac{\partial}{\partial \m}(\mat{L}(\m)^T \vec{\lambda})
\]
We summarize the compuation of $\mat{H}(\m)\widehat{\m}$ in Algorithm~\ref{alg:hess_apply_alg}.
\begin{algorithm}
\caption{Computation of $\mat{H}(\m)\widehat{\m}$ for a given $\widehat{\m}$.}
\label{alg:hess_apply_alg}
\begin{algorithmic}
\STATE solve the incremental state equation~\eqref{equ:inc_state} for $\widehat{\uu}$
\STATE Solve the incremental adjoint equation~\eqref{equ:inc_adjoint} for $\widehat{\lam}$
\STATE Evaluate $\mat{H}(\m)\widehat{\m}$ accordng to~\eqref{equ:hess_apply}.
\end{algorithmic}
\end{algorithm}
Note that the cost associates with Algorithm~\ref{alg:hess_apply_alg} is 
two (linear) PDE solves.
Also, by replacing the expressions for the incremental state and adjoint variables 
in the expression for the Hessian apply, we can write the (reduced) Hessian as:
\[
    \mat{H} = \mat{C}^\top \mat{L}(\m)^{-\top}\mathcal{L}_{\uu\uu}
              \mat{L}(\m)^{-1}\mat{C} + \mathcal{L}_{\m\m} 
              - \mathcal{L}_{\m\uu} \mat{L}(\m)^{-1}\mat{C}
              - \mat{C}^\top \mat{L}(\m)^{-\top}\mathcal{L}_{\uu\m}.
\]

Letting $\mat{B}$ be the (discretized) Fr\'echet derivative of the gradient with respect to $\thth$, 
$$ \mat{B} = g_{\thth}(\m) = -\mat{C}^\top\mat{L}(\m)^{-\top}\mat{Q}^\top\mat{W}\mat{Q}\mat{L}(\m)^{-1}\mat{V} + \mathcal{L}_{\m\uu}\mat{L}(\m)^{-1}\mat{V}. $$ 
This is the discretized version of the operator $\mathcal{B}$ in~\eqref{sen_op}.

\section*{Acknowledgements}
This paper describes objective technical results and analysis. Any
subjective views or opinions that might be expressed in the paper do
not necessarily represent the views of the U.S. Department of Energy
or the United States Government. Sandia National Laboratories is a
multimission laboratory managed and operated by National Technology
and Engineering Solutions of Sandia LLC, a wholly owned subsidiary of
Honeywell International, Inc., for the U.S. Department of Energy's
National Nuclear Security Administration under contract
DE-NA-0003525. SAND2020-2298 J.

The work of I. Sunseri and A. Alexanderian was supported in part by 
National Science Foundation under grant DMS-1745654.

\section*{Bibliography}
\bibliographystyle{unsrt}

\bibliography{refs}

\end{document}